\documentclass[12pt]{article}

\usepackage[T1]{fontenc}
\usepackage[utf8]{inputenc}
\usepackage[english]{babel}

\usepackage{amsmath}
\usepackage{amssymb}
\usepackage{amsthm}

\usepackage[a4paper,margin=2.5cm]{geometry}
\usepackage{microtype}
\usepackage[hidelinks]{hyperref}

\hypersetup{
  pdftitle={On the Structure of Low-Dimensional Poisson Algebras over Arbitrary Fields},
  pdfauthor={A. V. Petrov; O. O. Pypka},
  pdfsubject={Structural classification of Poisson algebras of dimensions at most three over arbitrary fields},
  pdfkeywords={Poisson algebra, low-dimensional algebra, structural classification, commutative associative algebra, Lie algebra, arbitrary field, characteristic 2}
}

\usepackage{authblk}

\newtheorem{theorem}{Theorem}[section]
\newtheorem{proposition}[theorem]{Proposition}
\newtheorem{lemma}[theorem]{Lemma}
\newtheorem{corollary}[theorem]{Corollary}

\theoremstyle{definition}

\theoremstyle{remark}

\title{On the Structure of Low-Dimensional Poisson Algebras over Arbitrary Fields}

\author[1]{A.~V.~Petrov}
\author[1]{O.~O.~Pypka}

\affil[1]{Oles Honchar Dnipro National University,
Dnipro, Ukraine}

\date{}

\begin{document}

\maketitle

\begin{abstract}
We investigate the structure of Poisson algebras of dimensions at most
three over an arbitrary field. Our approach is based on the internal
structure of the associated commutative associative and Lie algebras,
with particular emphasis on the associative square $P^2$, the derived
Lie algebra $[P,P]$, the Lie center, the associative annihilator,
idempotents, ideals and decomposability.

We obtain a complete classification in dimensions one and two and give
a structural classification in dimension three. In dimension two, we
prove that the associative and Lie multiplications cannot be
simultaneously non-zero. In dimension three, the classification is
organized according to the dimension and position of the derived Lie
algebra and, in the case of trivial Lie multiplication, according to
the dimension of $P^2$.

The arbitrary-field setting leads to phenomena which do not occur over
the complex field. In particular, quadratic and cubic field extensions
appear naturally in the classification, some families depend on
equivalence classes of symmetric bilinear forms and on the structure of
three-dimensional Lie algebras over the ground field, and
characteristic $2$ gives an additional family of Poisson algebras with
both multiplications non-zero. Over the complex field, the resulting classification specializes,
up to changes of basis and notation, to the known classifications in
dimensions at most three.
\end{abstract}

\medskip

\noindent\textbf{Keywords and phrases.}
Poisson algebra; low-dimensional algebra; structural classification;
commutative associative algebra; Lie algebra; arbitrary field;
characteristic 2.

\medskip

\noindent\textbf{2020 Mathematics Subject Classification.}
Primary 17B63; Secondary 17B05, 13E10.

\medskip

\section{Introduction}
\label{sec:introduction}

Poisson algebras combine two classical algebraic structures on the
same vector space: a commutative associative multiplication and a Lie
multiplication connected by the Leibniz identity. The interaction
between these two operations makes their structure substantially
richer than that of commutative associative algebras or Lie algebras
considered separately. Even in small dimensions, the classification
problem involves not only the possible associated algebraic
structures, but also the compatibility between them.

An important early systematic treatment of finite-dimensional Poisson
algebras was given by Goze and Remm in
\cite{GozeRemm2008}. They interpreted a Poisson algebra in terms of a
single non-associative multiplication and investigated algebraic,
cohomological and deformation-theoretic aspects of the resulting
structures. Although the general framework of
\cite{GozeRemm2008} is developed under restrictions on the
characteristic, its low-dimensional classification results concern
Poisson algebras over the complex field.

In dimension two, the complex classification is particularly simple.
If the associated Lie algebra is abelian, the problem reduces to the
classification of two-dimensional commutative associative algebras.
If the associated Lie algebra is non-abelian, the compatible
commutative associative multiplication is trivial. Thus, over
$\mathbb C$, there is no two-dimensional Poisson algebra for which
both multiplications are non-zero.

The three-dimensional complex case was also considered in
\cite{GozeRemm2008}. A later treatment was given by Abdelwahab,
Fernández Ouaridi and Martín González in
\cite{AbdelwahabFernandezMartin2025}. Their approach starts with a
fixed commutative associative algebra and studies Lie brackets
compatible with its multiplication. The automorphism group of the
associative algebra acts on the set of such brackets, and its orbits
determine the corresponding Poisson algebras up to isomorphism. This
led to a complete algebraic classification of three-dimensional
complex Poisson algebras, together with a correction to the earlier
classification. The same work also studies degenerations, orbit
closures and irreducible components of the corresponding algebraic
variety.

A different approach is used for nilpotent Poisson algebras in
\cite{AbdelwahabBarreiroCalderonFernandez2023}. Here nilpotency is
understood in the sense adopted in that paper for an algebra endowed
with the two Poisson operations. Using an analogue of the
Skjelbred--Sund method, the authors describe such algebras as central
extensions of lower-dimensional ones and obtain classifications in
dimensions at most four. The algebraic classification is carried out
over an arbitrary field of characteristic different from $2$, while
the geometric classification is considered for complex nilpotent
Poisson algebras of dimensions three and four. Thus a substantial part
of the low-dimensional arbitrary-field problem was already known
before the present work.

The relation with the nilpotent classification of
\cite{AbdelwahabBarreiroCalderonFernandez2023} is considered again
after the complete three-dimensional classification. In particular,
we identify explicitly which of the two- and three-dimensional
algebras obtained here are nilpotent in the sense used in that work.

More recently, the complete classification of four-dimensional
complex Poisson algebras was obtained by Abdelwahab and Sánchez in
\cite{AbdelwahabSanchez2025}. Their classification contains both
non-parametric and parametric families with non-zero commutative
associative multiplication, together with the four-dimensional complex
Lie algebras regarded as Poisson algebras with trivial associative
multiplication.

Structural properties of Poisson algebras have also been investigated
independently of low-dimensional classification. Lie solvability over
fields of arbitrary characteristic was studied by Siciliano and Usefi
in \cite{SicilianoUsefi2021}. Fernández Ouaridi, Navarro and Towers
considered abelian subalgebras and ideals of maximal dimension in
\cite{FernandezNavarroTowers2024}, while further structural properties
of nilpotent and solvable Poisson algebras were studied by Fernández
Ouaridi and Omirov in \cite{FernandezOmirov2025}.

The present paper adopts a structural approach rather than beginning
with a direct analysis of structure constants. A similar philosophy has proved
useful in the study of low-dimensional Leibniz algebras; see
\cite{KurdachenkoPypkaSubbotin2022}. In that setting, internal
invariants and characteristic substructures provide a natural route
to normal forms and isomorphism classes. Here we apply a related
viewpoint to Poisson algebras, where two interacting multiplications
must be treated simultaneously.

To the best of our knowledge, a complete classification up to
isomorphism of all two- and three-dimensional Poisson algebras over an
arbitrary ground field, with characteristic $2$ included, has not
previously been obtained in this form. The purpose of the present
paper is therefore to describe the structure of Poisson algebras of
dimensions at most three over an arbitrary field $F$ and to derive
their classification from this structural analysis. No restriction on
$\operatorname{char}(F)$ is imposed unless explicitly stated.

Our approach is based primarily on intrinsic subspaces and invariants
of the two associated algebras. In particular, the derived Lie algebra
\[
[P,P]
\]
and the associative square
\[
P^2
\]
play a central role. Their dimensions, together with the Lie center,
the associative annihilator, associative powers, ideals, idempotents
and direct-sum decompositions, provide effective tools for separating
the possible structures and determining the remaining isomorphism
parameters.

In dimension one, exactly two Poisson algebras occur, both with
trivial Lie multiplication. In dimension two, we prove that every
Poisson algebra over an arbitrary field has either trivial Lie
multiplication or trivial associative multiplication. Consequently,
there is no two-dimensional Poisson algebra with both multiplications
non-zero. The arbitrary-field classification is nevertheless richer
than the complex one, since quadratic field extensions occur
naturally among the associative cases.

The three-dimensional situation is substantially more involved. We
organize the classification according to
\[
\dim_F[P,P]=0,1,2,3.
\]
When $[P,P]=0$, the problem reduces to the structure of
three-dimensional commutative associative algebras and is further
organized according to $\dim_F P^2$. When
$\dim_F[P,P]=1$, two different situations arise according as the
derived algebra is contained in the Lie center or not. When
$\dim_F[P,P]=2$, the derived algebra is abelian and the Lie structure
is determined by an invertible linear operator on this
two-dimensional ideal. Finally, when $[P,P]=P$, the associated Lie
algebra is simple and the compatible associative multiplication is
necessarily trivial.

This yields a complete structural classification in dimension three.
Some structural entries represent individual isomorphism classes,
whereas others are naturally parameterized by field extensions,
matrix equivalence classes or isomorphism classes of
three-dimensional simple Lie algebras over $F$. This formulation is
particularly well suited to arbitrary ground fields and avoids
introducing artificial normal forms that would require algebraic
closure or restrictions on the characteristic.

Several field-dependent phenomena become visible in this setting.
Quadratic and cubic field extensions occur naturally, certain
families are controlled by equivalence classes of symmetric matrices,
and the Lie structures occurring in the cases
$\dim_F[P,P]=2$ and $[P,P]=P$ depend on the ground field. Moreover,
characteristic $2$ produces an additional family of genuinely mixed
Poisson algebras, that is, algebras for which both multiplications are
non-zero.

When $F=\mathbb C$, these field-dependent phenomena collapse. At the
end of Section~\ref{sec:dimension-three}, we give an explicit
correspondence between the structural types obtained here and the
known three-dimensional complex classification of
\cite{AbdelwahabFernandezMartin2025}. Thus the arbitrary-field
description specializes exactly, up to changes of basis and notation,
to the established complex classification.

The paper is organized as follows. Section~\ref{sec:preliminaries}
fixes the terminology and notation and records several elementary
structural lemmas. Sections~\ref{sec:dimension-one} and
\ref{sec:dimension-two} treat dimensions one and two, respectively.
Section~\ref{sec:dimension-three} is devoted to the
three-dimensional case and develops the classification according to
the structure of the derived Lie algebra, followed by comparisons
with the previously known nilpotent and complex classifications.

\section{Preliminaries}
\label{sec:preliminaries}

Throughout the paper, $F$ denotes an arbitrary field and all vector
spaces and algebras are finite-dimensional over $F$. No restriction on
$\operatorname{char}(F)$ is imposed unless explicitly stated.

A Poisson algebra over $F$ is an $F$-vector space $P$ equipped with
two $F$-bilinear operations: a commutative associative multiplication
\[
(x,y)\longmapsto xy
\]
and a Lie multiplication
\[
(x,y)\longmapsto [x,y],
\]
satisfying the Leibniz identity
\[
[xy,z]=x[y,z]+y[x,z]
\]
for all $x,y,z\in P$.

The Lie multiplication is understood to be alternating, that is,
\[
[x,x]=0
\]
for every $x\in P$, and satisfies the Jacobi identity
\[
[x,[y,z]]+[y,[z,x]]+[z,[x,y]]=0.
\]
In particular, bilinearity and alternation imply
\[
[x,y]=-[y,x].
\]
The use of alternation rather than skew-symmetry as a defining
condition is important in characteristic $2$.

The commutative associative multiplication is not assumed to have an
identity. If an identity exists, it will be specified explicitly.

We denote the associated commutative associative algebra by
\[
P(+,\cdot)
\]
and the associated Lie algebra by
\[
P(+,[\, ,\,]).
\]

An isomorphism of Poisson algebras is an $F$-linear bijection which
preserves both the associative multiplication and the Lie
multiplication.

A vector subspace $S$ of $P$ is called a \emph{Poisson subalgebra} if
\[
SS\subseteq S
\qquad\text{and}\qquad
[S,S]\subseteq S.
\]
A vector subspace $I$ of $P$ is called a \emph{Poisson ideal} if
\[
PI\subseteq I
\qquad\text{and}\qquad
[P,I]\subseteq I.
\]

For the commutative associative structure, we put
\[
P^2=\operatorname{span}_F\{xy\mid x,y\in P\}
\]
and define recursively
\[
P^{n+1}=P^nP
\qquad (n\geq2).
\]
Thus, throughout the paper, $P^n$ denotes an associative power.
The associated commutative associative algebra is called
\emph{nilpotent} if
\[
P^n=0
\]
for some positive integer $n$.

The \emph{associative annihilator} is
\[
\operatorname{Ann}(P(+,\cdot))
=
\{a\in P\mid aP=0\}.
\]

For the associated Lie algebra, we write
\[
[P,P]
=
\operatorname{span}_F\{[x,y]\mid x,y\in P\}
\]
for its derived algebra. Its center is denoted by
\[
\zeta(P(+,[\, ,\,]))
=
\{z\in P\mid [z,x]=0
\text{ for every }x\in P\}.
\]

When solvability of the associated Lie algebra
\[
L=P(+,[\, ,\,])
\]
is considered, its derived series is defined by
\[
L^{(0)}=L,\qquad
L^{(n+1)}=[L^{(n)},L^{(n)}].
\]
The Lie algebra $L$ is called \emph{solvable} if
\[
L^{(n)}=0
\]
for some $n$.

Its lower central series is defined by
\[
\gamma_1(L)=L,\qquad
\gamma_{n+1}(L)=[\gamma_n(L),L].
\]
The Lie algebra $L$ is called \emph{nilpotent} if
\[
\gamma_{c+1}(L)=0
\]
for some positive integer $c$. The least such $c$ is called the
nilpotency class of $L$.

We shall explicitly distinguish between nilpotency of the
commutative associative algebra $P(+,\cdot)$ and nilpotency of the
associated Lie algebra $P(+,[\, ,\,])$.

A Poisson algebra $P$ is called \emph{decomposable} if
\[
P=I\oplus J
\]
for some non-zero Poisson ideals $I$ and $J$. Otherwise $P$ is called
\emph{indecomposable}.

An element $e$ of a commutative associative algebra is called an
\emph{idempotent} if
\[
e^2=e.
\]
If the algebra has an identity $1$, an idempotent is called
\emph{non-trivial} if
\[
e\neq0,1.
\]

For a finite-dimensional commutative associative algebra $A$, we
denote by
\[
\operatorname{Nil}(A)
=
\{a\in A\mid a^n=0
\text{ for some positive integer }n\}
\]
its nilradical. Since $A$ is commutative,
$\operatorname{Nil}(A)$ is an ideal.

If $E/F$ is a finite field extension, its degree is denoted by
\[
[E:F].
\]
Field extensions appearing in the classification are always
considered up to $F$-algebra isomorphism.

Unless otherwise stated, all products and Lie brackets between basis
elements which are neither explicitly displayed nor determined from
the displayed relations by commutativity, bilinearity and
alternation are understood to be zero.

We record several elementary observations that will be used repeatedly
in the sequel.

\begin{lemma}
\label{lem:basic-poisson-ideals}
For every Poisson algebra $P$, the subspaces $P^2$ and
$\operatorname{Ann}(P(+,\cdot))$ are Poisson ideals of $P$.
\end{lemma}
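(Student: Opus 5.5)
We must check two closure conditions for each subspace: closure under associative multiplication by $P$, and closure under the Lie bracket with $P$. For both subspaces the associative condition is immediate, so the real content is the Lie condition, and for it we rely only on the Leibniz identity
\[
[xy,z]=x[y,z]+y[x,z]
\]
together with commutativity and associativity.

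For $P^2$, associative closure holds because $PP^2\subseteq PP=P^2$. For the Lie condition it suffices, by bilinearity, to treat generators $xy$ of $P^2$. Antisymmetry and the Leibniz identity give
\[
[z,xy]=-[xy,z]=-x[y,z]-y[x,z].
\]
Each term on the right is a product of two elements of $P$, so it lies in $P^2$. Hence $[P,P^2]\subseteq P^2$, and $P^2$ is a Poisson ideal.

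For $A=\operatorname{Ann}(P(+,\cdot))$, associative closure is trivial since $PA=0\subseteq A$. For the Lie condition, take $a\in A$ and $x\in P$; we must show $[x,a]\in A$, that is, $[x,a]y=0$ for every $y\in P$. Apply the Leibniz identity to the product $ay=0$:
\[
0=[ay,x]=a[y,x]+y[a,x].
\]
Here $a[y,x]=0$ because $a$ annihilates $P$. Therefore $y[a,x]=0$ for every $y\in P$, so $[a,x]\in A$, and by antisymmetry $[x,a]\in A$ as well. Thus $A$ is a Poisson ideal.

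The only step needing care is this last one: choosing to expand the vanishing product $ay$ with the Leibniz identity, so that one term drops out and the other term is exactly the product we want to show is zero. No characteristic restriction is used anywhere. The identity $[u,v]=-[v,u]$ follows from alternation and bilinearity, as noted in Section~\ref{sec:preliminaries}, so the argument is valid over an arbitrary field $F$, including characteristic $2$.
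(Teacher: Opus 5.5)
Your proof is correct and follows the paper's argument essentially verbatim. For $P^2$ you apply the Leibniz identity to a generator $xy$, making the antisymmetry step explicit, and for the annihilator you expand the vanishing product $0=[ay,x]$ so that the term $a[y,x]$ drops out, exactly as the paper does.
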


\begin{proof}
Clearly,
\[
PP^2\subseteq P^2.
\]
For $x,y,z\in P$, the Leibniz identity gives
\[
[x,yz]=y[x,z]+z[x,y]\in P^2.
\]
Hence
\[
[P,P^2]\subseteq P^2,
\]
and therefore $P^2$ is a Poisson ideal.

Let
\[
a\in\operatorname{Ann}(P(+,\cdot)).
\]
For arbitrary $x,y\in P$, we have $ay=0$, and therefore
\[
0=[ay,x]=a[y,x]+y[a,x].
\]
The first term on the right-hand side is zero, since $aP=0$. Hence
\[
y[a,x]=0
\]
for every $y\in P$, and consequently
\[
[a,x]\in\operatorname{Ann}(P(+,\cdot)).
\]
Thus the associative annihilator is also a Poisson ideal.
\end{proof}

\begin{lemma}
\label{lem:idempotents-central}
Every idempotent of $P(+,\cdot)$ belongs to the center of
$P(+,[\, ,\,])$.
\end{lemma}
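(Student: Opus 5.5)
The plan is to apply the Leibniz identity to $e=e\cdot e$ and compute $[e,x]$ for an arbitrary $x\in P$. Since the identity is stated as $[xy,z]=x[y,z]+y[x,z]$, substituting $x=y=e$ and $z=x$ gives
\[
[e,x]=[ee,x]=e[e,x]+e[e,x]=2e[e,x].
\]
This relation alone is not enough in characteristic $2$, where it reads $[e,x]=0$ only formally as $[e,x]=2e[e,x]=0$. In fact it then already gives the conclusion, since $2=0$. So the argument splits by characteristic.

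In characteristic $2$, the displayed relation yields $[e,x]=0$ immediately.

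In characteristic different from $2$, put $u=[e,x]$, so that $u=2eu$. Multiplying by $e$ and using associativity, commutativity and $e^2=e$ gives
\[
eu=2e^2u=2eu.
\]
Hence $eu=0$, and then $u=2eu=0$.

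An alternative uniform route avoids splitting on characteristic. From $u=2eu$ we get $eu=2eu$ by multiplying by $e$, so $eu=0$, and therefore $u=2\cdot 0=0$. This works in every characteristic, since it never divides by $2$. I will write the proof this way, noting only associativity, commutativity and idempotence.

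There is no real obstacle. The only point needing care is to avoid dividing by $2$, which the multiply-by-$e$ trick handles uniformly. Since $x$ was arbitrary, $e\in\zeta(P(+,[\, ,\,]))$.
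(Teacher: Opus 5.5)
Your proof is correct and is the same as the paper's: the Leibniz identity applied to $[e^2,x]$ gives $d=2ed$ for $d=[e,x]$, multiplying by $e$ gives $ed=2ed$, hence $ed=0$ and $d=0$, uniformly in every characteristic (the paper also notes that in characteristic $2$ the relation $d=2ed$ already gives $d=0$). Your earlier paragraph claiming the relation is ``not enough'' in characteristic $2$ is confused and unnecessary, so leave it out of the final write-up.
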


\begin{proof}
Let $e^2=e$ and let $x\in P$. Put
\[
d=[e,x].
\]
The Leibniz identity gives
\[
d=[e^2,x]=2e[e,x]=2ed.
\]
Multiplying by $e$ and using $e^2=e$, we obtain
\[
ed=2ed,
\]
and hence $ed=0$. Consequently
\[
d=2ed=0.
\]
Thus
\[
[e,x]=0
\]
for every $x\in P$, and therefore
\[
e\in\zeta(P(+,[\, ,\,])).
\]
In characteristic $2$, the equality $d=2ed$ already gives $d=0$.
\end{proof}

The following elementary fact will be useful whenever the associative
square coincides with the whole algebra.

\begin{lemma}
\label{lem:square-equals-algebra}
Let $A$ be a finite-dimensional commutative associative algebra over
$F$. If
\[
A^2=A,
\]
then $A$ has an identity element.
\end{lemma}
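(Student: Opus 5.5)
Let $A$ be a finite-dimensional commutative associative algebra with $A^2=A$. The plan is to show that the nilradical $N=\operatorname{Nil}(A)$ is small enough that $A/N$ controls everything, and then lift an identity from $A/N$. Since $A$ is finite-dimensional and commutative, $N$ is a nilpotent ideal: it is finitely generated by nilpotent elements, and commutativity makes products of these elements nilpotent of bounded length. So $N^k=0$ for some $k$.

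First I would show $A\neq N$ when $A\neq0$ (the case $A=0$ is trivial, since $0$ is an identity). Indeed, if $A=N$, then $A$ is nilpotent. But $A^2=A$ gives $A^n=A$ for all $n$, hence $A=0$. So $\bar A=A/N$ is a non-zero reduced finite-dimensional commutative algebra, and it still satisfies $\bar A^2=\bar A$.

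Next I want an idempotent $e\in A$ whose image is the identity of $\bar A$. Two steps are needed.

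\emph{Identity in $\bar A$.} Use the Fitting decomposition of multiplication operators. For $a\in A$, the operator $L_a\colon x\mapsto ax$ splits $A=\ker L_a^m\oplus \operatorname{im}L_a^m$ for large $m$. If $a$ is not nilpotent, the restriction of $L_a$ to $aA^{(m)}=\operatorname{im}L_a^m$ is invertible. This subspace is an ideal, and $a^m$ lies in it, so some polynomial in $a$ without constant term acts as the identity on it. That polynomial yields a non-zero idempotent $e_1=a\,p(a)$. This is the standard way to get idempotents in Artinian rings without using an identity.

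\emph{Peirce decomposition.} Then decompose $A=e_1A\oplus A'$, where $A'=\{x-e_1x\}$ is the annihilator of $e_1$. Since $A$ is commutative, both summands are ideals, and $A'^2=A'$ follows from $A^2=A$. By induction on dimension, $A'$ has an identity $e'$; the base case is $A'=0$, which has identity $0$. Then $e_1+e'$ is an identity for $A$, because $e_1$ acts as the identity on $e_1A$ and $e'$ acts as the identity on $A'$, with the cross terms vanishing.

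This actually bypasses the quotient argument. The induction is legitimate because $e_1\neq0$ forces $\dim A'<\dim A$, and the existence of a non-nilpotent $a$ is guaranteed by $A\neq N$.

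The main obstacle is constructing the idempotent from a non-nilpotent $a$ over an arbitrary field with no identity available. Concretely: let $V=\operatorname{im}L_a^m$, on which $L_a$ is invertible. By Cayley--Hamilton on $V$, $L_a|_V$ satisfies a polynomial with non-zero constant term. This lets me write $\mathrm{id}_V=q(L_a)|_V$ with $q(0)=0$. I then set $e_1=q(a)$, interpreted as an element of $A$ because $q$ has no constant term. Since $a^m\in V$, I need to check $e_1\in V$ and $e_1 v=v$ for $v\in V$. These give $e_1^2=e_1$, and $e_1\neq0$ because $V\neq0$. I expect the bookkeeping of $e_1\in V$ to need $q$ divisible by $t^m$, which I would arrange by replacing $q$ by $q^m$-type powers. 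This also works in characteristic $2$.
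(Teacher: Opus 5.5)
Your proof is correct, but it takes a genuinely different route from the paper's.

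\textbf{The paper's route.} The paper uses the determinant (Nakayama) trick. It writes each basis vector as $a_i=\sum_j c_{ij}a_j$ with $c_{ij}\in A$ and works in the unitization $F1\oplus A$. Multiplying by the adjugate of $I-C$ gives $\det(I-C)\,a_i=0$. Since $\det(I-C)=1-e$ with $e\in A$, this produces the identity $e$ in one stroke. There is no nilradical, no idempotent construction and no induction.

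\textbf{Your route.} You go through structure in four steps.
\begin{enumerate}
\item Nilpotence of $\operatorname{Nil}(A)$, combined with $A^n=A$, gives a non-nilpotent element $a$.
\item Fitting's lemma and Cayley--Hamilton applied to $L_a|_V$ give a non-zero idempotent $e_1$ that is a polynomial in $a$.
\item The Peirce splitting $A=e_1A\oplus\operatorname{Ann}(e_1)$ preserves the hypothesis. Indeed, $x\mapsto x-e_1x$ is a surjective homomorphism onto $A'$, so $A'^2=A'$.
\item Induction on dimension finishes the proof.
\end{enumerate}

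\textbf{Comparison.} Both arguments hinge on a Cayley--Hamilton identity.
\begin{itemize}
\item The paper applies it once, to the relation $A=A\cdot A$. It uses finite-dimensionality only to obtain a finite generating set. In effect it proves the more general fact that a finitely generated idempotent ideal of a commutative ring is generated by an idempotent.
\item Your argument uses finite-dimensionality essentially, through Fitting's lemma and the bounded nilpotency of $\operatorname{Nil}(A)$, and it is longer. In exchange, it exhibits the identity as a sum of orthogonal idempotents, each built from polynomials in elements of $A$. This is the kind of decomposition the paper later obtains from the structure theorem for Artinian algebras.
\end{itemize}

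\textbf{The bookkeeping step you flagged.} For $a^m\in V=a^mA$ you need $m$ strictly larger than the index at which $\operatorname{im}L_a^k$ stabilises. Your $q^m$ fix then works. Alternatively, put $e_1=q(a)^{m+1}$. This equals $a^m\cdot(a\,s(a))$ for a polynomial $s$, so it lies in $a^mA=V$ directly and sidesteps the issue.
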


\begin{proof}
Choose a basis
\[
a_1,\ldots,a_n
\]
of $A$. Since $A=A^2$, for every $i$ there exist elements
$c_{ij}\in A$ such that
\[
a_i=\sum_{j=1}^n c_{ij}a_j.
\]
Put
\[
C=(c_{ij})
\]
and consider the unitization $F1\oplus A$. Then
\[
(I-C)
\begin{pmatrix}
a_1\\
\vdots\\
a_n
\end{pmatrix}
=0.
\]
Since the unitization is commutative, multiplication by the adjugate
matrix yields
\[
\det(I-C)a_i=0
\qquad (1\leq i\leq n).
\]
The constant term of $\det(I-C)$ is $1$, while every other term
belongs to $A$. Hence
\[
\det(I-C)=1-e
\]
for some $e\in A$. Therefore
\[
(1-e)a_i=0
\]
for every $i$, and consequently
\[
ea=a
\]
for every $a\in A$. Thus $e$ is an identity element of $A$.
\end{proof}

Combining the preceding two lemmas, we obtain an observation which
will be particularly useful below: if a finite-dimensional Poisson
algebra $P$ satisfies
\[
P^2=P,
\]
then $P(+,\cdot)$ has an identity element $e$, and necessarily
\[
e\in\zeta(P(+,[\, ,\,])).
\]

\section{Poisson algebras of dimension one}
\label{sec:dimension-one}

We begin with the one-dimensional case. The Lie structure is
necessarily trivial, so the classification is completely determined
by the commutative associative multiplication. No restriction on the
characteristic of the ground field is required.

\begin{theorem}
Let $P$ be a Poisson algebra of dimension $1$ over an arbitrary field
$F$. Then $P$ is isomorphic to exactly one of the following algebras:
\[
\begin{array}{ll}
P_{1,1}=Fa, & a^2=0,\quad [a,a]=0,\\[2mm]
P_{1,2}=Fe, & e^2=e,\quad [e,e]=0.
\end{array}
\]
The algebras $P_{1,1}$ and $P_{1,2}$ are not isomorphic.
\end{theorem}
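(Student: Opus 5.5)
The plan is to fix a non-zero vector $a\in P$, so that $P=Fa$. The Lie multiplication is then trivial: for $x=\lambda a$ and $y=\mu a$, bilinearity and alternation give
\[
[x,y]=\lambda\mu[a,a]=0 .
\]
Hence the Leibniz identity holds automatically, and the Poisson structure is determined entirely by the commutative associative multiplication. Writing $a^2=\alpha a$ with $\alpha\in F$, commutativity and associativity impose no condition on $\alpha$. Any such $\alpha$ therefore gives a Poisson algebra, which shows that both $P_{1,1}$ and $P_{1,2}$ actually occur.

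Next I would split into two cases.
\begin{itemize}
\item If $\alpha=0$, then $P$ is $P_{1,1}$ with the basis $a$.
\item If $\alpha\neq0$, put $e=\alpha^{-1}a$. Then
\[
e^2=\alpha^{-2}a^2=\alpha^{-1}a=e,
\]
and the linear map sending $e$ to the basis vector of $P_{1,2}$ is an isomorphism. It preserves the associative product by construction, and it preserves the bracket because both brackets are zero.
\end{itemize}
The rescaling is the only step with any content, and it uses nothing beyond $\alpha$ being invertible in $F$. No characteristic assumption enters anywhere.

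For non-isomorphism, I would use the invariant $\dim_F P^2$, which any isomorphism preserves. In $P_{1,1}$ we have $P^2=0$, while in $P_{1,2}$ we have $P^2=Fe\neq0$. Equivalently, $P_{1,1}(+,\cdot)$ is nilpotent and $P_{1,2}(+,\cdot)$ is not. There is no real obstacle in this proof.
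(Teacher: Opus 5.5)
Your proposal is correct and follows the paper's proof essentially step for step. The bracket vanishes by alternation, so $P=Fa$ is determined by $a^2=\alpha a$. When $\alpha\neq0$ you rescale to $e=\alpha^{-1}a$, and the invariant $P^2$ ($\langle0\rangle$ versus the whole algebra) separates $P_{1,1}$ from $P_{1,2}$, exactly as in the paper.
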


\begin{proof}
Let $0\neq a\in P$. Since $\dim_F P=1$, we have $P=Fa$. The Lie
multiplication is alternating, so $[a,a]=0$, and hence
\[
[P,P]=\langle0\rangle.
\]

There exists $\alpha\in F$ such that
\[
a^2=\alpha a.
\]
If $\alpha=0$, then $P\cong P_{1,1}$. Suppose that $\alpha\neq0$ and
put
\[
e=\alpha^{-1}a.
\]
Then
\[
e^2=\alpha^{-2}a^2=\alpha^{-1}a=e,
\]
and therefore $P\cong P_{1,2}$. Since the Lie multiplication is zero,
the Leibniz identity is automatically satisfied in both cases.

Finally,
\[
P_{1,1}^{\,2}=\langle0\rangle,
\qquad
P_{1,2}^{\,2}=P_{1,2}.
\]
Thus the two algebras are not isomorphic.
\end{proof}

We record the internal structure of these algebras. In both cases the
associated Lie algebra is abelian and
\[
\zeta(P_{1,j}(+,[\, ,\,]))=P_{1,j}
\qquad (j=1,2).
\]
Consequently, both associated Lie algebras are nilpotent of class $1$
and solvable.

For the associative structures,
\[
P_{1,1}^{\,2}=\langle0\rangle,\qquad
\operatorname{Ann}(P_{1,1}(+,\cdot))=P_{1,1},
\]
whereas
\[
P_{1,2}^{\,2}=P_{1,2},\qquad
\operatorname{Ann}(P_{1,2}(+,\cdot))=\langle0\rangle.
\]
Thus $P_{1,1}(+,\cdot)$ is nilpotent and has no non-zero idempotents,
while $P_{1,2}(+,\cdot)$ has the multiplicative identity $e$ and is
isomorphic to $F$.

Since a one-dimensional vector space has no non-zero proper subspaces,
the only Poisson subalgebras and Poisson ideals of either algebra are
$\langle0\rangle$ and the whole algebra. In particular, both algebras
are indecomposable as direct sums of non-zero Poisson ideals.

Thus precisely two one-dimensional Poisson algebras occur over every
field, and their structure is independent of the characteristic of the
ground field.

\section{Poisson algebras of dimension two}
\label{sec:dimension-two}

We now turn to the two-dimensional case. Here the associated Lie
algebra can be either abelian or non-abelian. An important feature of
this dimension is that a non-zero Lie multiplication is incompatible
with a non-zero associative multiplication. Thus the two operations
cannot be simultaneously non-zero.

When the Lie multiplication is trivial, the problem becomes purely
associative. The natural invariant in this case is $\dim_F P^2$, which
can take the values $0$, $1$, or $2$. When $P^2=P$, the associated
commutative associative algebra is unital by
Lemma~\ref{lem:square-equals-algebra}.

\begin{theorem}
\label{thm:dimension-two}
Let $P$ be a Poisson algebra of dimension $2$ over an arbitrary field
$F$. Then $P$ is isomorphic to exactly one of the following algebras,
where in {\rm (vi)} the quadratic extension is taken up to
$F$-algebra isomorphism.

\begin{enumerate}
\renewcommand{\labelenumi}{(\roman{enumi})}

\item
$P_{2,1}=Fa\oplus Fb$, where all associative products and all Lie
brackets are zero.

\item
$P_{2,2}=Fa\oplus Fb$, where
\[
a^2=b,
\]
and all other associative products and all Lie brackets are zero.

\item
$P_{2,3}=Fe\oplus Fb$, where
\[
e^2=e,
\]
and all other associative products and all Lie brackets are zero.

\item
$P_{2,4}=Fe_1\oplus Fe_2$, where
\[
e_1^2=e_1,\qquad
e_2^2=e_2,\qquad
e_1e_2=0,
\]
and the Lie multiplication is zero.

\item
$P_{2,5}=Fe\oplus Fu$, where
\[
e^2=e,\qquad
eu=u,\qquad
u^2=0,
\]
and the Lie multiplication is zero.

\item
$P_{2,6}(E)=E$, where $E/F$ is a quadratic field extension, the
associative multiplication is the ordinary multiplication in $E$, and
the Lie multiplication is zero.

\item
$P_{2,7}=Fa\oplus Fb$, where
\[
[a,b]=b,
\]
and the associative multiplication is zero.

\end{enumerate}

Within {\rm (vi)}, the isomorphism type depends on the
$F$-algebra isomorphism class of $E$; with this understood, the listed
types are pairwise non-isomorphic.
\end{theorem}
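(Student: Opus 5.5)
The proof splits according to whether the Lie multiplication is trivial. The first step is to show that it cannot be non-zero together with a non-zero associative multiplication.

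\emph{The mixed case.} Suppose $[P,P]\neq\langle0\rangle$. In dimension two, any non-abelian Lie algebra has a basis $a,b$ with $[a,b]=b$; this holds over any field and in any characteristic. Indeed, take $[x,y]=c\neq0$, so $[P,P]=Fc$, and complete to a basis $c,a$; then $[a,c]=\lambda c$ with $\lambda\neq0$, and we rescale $a$. Hence $\zeta(P(+,[\, ,\,]))=\langle0\rangle$. We now aim to show that $P(+,\cdot)$ is zero.
\begin{enumerate}
\item By Lemma~\ref{lem:idempotents-central}, every idempotent is central, so $P(+,\cdot)$ has no non-zero idempotents.
\item If $P^2=P$, then Lemma~\ref{lem:square-equals-algebra} gives an identity element, which would be a non-zero idempotent. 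So $\dim_F P^2\le1$.
\item By Lemma~\ref{lem:basic-poisson-ideals}, $P^2$ is a Lie ideal. The only one-dimensional Lie ideal is $Fb$, so $P^2\subseteq Fb$.
\item Write $a^2=\alpha b$, $ab=\beta b$, $b^2=\gamma b$. We have $\gamma=0$: otherwise $\gamma^{-1}b$ is a non-zero idempotent, which is impossible. Similarly $\beta=0$: otherwise $P(+,\cdot)$ would have a non-zero idempotent, since a suitable combination $\beta^{-1}a+\mu b$ squares to itself. Alternatively one can apply the Leibniz identity directly.
\item Leibniz with $[ab,a]=a[b,a]+b[a,a]$ gives $\beta[b,a]=-ab$, i.e.\ $-\beta b=-\beta b$, which is consistent. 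The relation $[a^2,b]=2a[a,b]$ gives $\alpha[b,b]=0=2ab=2\beta b$.
\item To kill $\alpha$, use $[a^2,a]=2a[a,a]=0$. The left-hand side equals $\alpha[b,a]=-\alpha b$, so $\alpha=0$.
\item Where characteristic $2$ makes the factor $2$ useless, rely on the idempotent arguments above. Also use $[ab,b]=a[b,b]+b[a,b]=b^2$, so $\beta[b,b]=0=\gamma b$, consistent with $\gamma=0$; and $[ab,a]=a[b,a]+b[a,a]=-ab$ gives $\beta(-b)=-\beta b$, which carries no information. For $\beta$ I would instead use $[b^2,a]$ or associativity, $(ab)a=a(ab)$, together with idempotents.
\end{enumerate}
This yields (vii).

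\emph{The abelian case.} The problem is purely associative, and I split by $\dim_F P^2$.
\begin{itemize}
\item If $P^2=0$, this is (i).
\item If $\dim P^2=1$, say $P^2=Fc$, there are two subcases. If $c^2\neq0$, normalize to an idempotent $e$ with $P^2=Fe$. Then $x\mapsto x-ex$ maps onto an annihilator complement, giving (iii). If $c^2=0$, take $a$ with $a^2\neq0$. If $a^2$ and $a$ are independent, we get (ii) with $b=a^2$, $ab=a^3\in Fa^2$; then $a^3=\lambda a^2$, and nilpotence of $a^2$ forces $\lambda=0$. If no element has a non-zero square, we get a symmetric form. I then have to handle characteristic $2$ carefully, since $xy$ non-zero with all squares zero is possible: $(x+y)^2=2xy$. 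In that case one gets $ab=c$, $a^2=b^2=0$, and this needs checking against the list.
\item If $P^2=P$, the algebra is unital by Lemma~\ref{lem:square-equals-algebra}, so $P=F1\oplus Fu$ with $u^2=\alpha+\beta u$, i.e.\ $P\cong F[t]/(f)$ with $f$ quadratic. If $f$ is irreducible we get (vi). If $f$ has distinct roots we get (iv). A repeated root gives (v).
\end{itemize}

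\emph{Non-isomorphism.} The invariants $\dim P^2$, $[P,P]$, the existence of an identity, the number of idempotents and nilpotency separate all the cases. Within (vi), the isomorphism type is the isomorphism class of $E$.

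The main obstacle I expect is the nilpotent case with $\dim P^2=1$ in characteristic $2$. There $a^2=b^2=0$ with $ab=c$ seems possible, but $P^2=Fc$ must lie in the span and $c$ is a combination of $a$ and $b$; checking $c(a)$ etc.\ via associativity should force a contradiction or reduce to (ii). I would try associativity on $a(ab)=a^2b=0$ first.
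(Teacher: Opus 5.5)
Your mixed-case reduction uses the structural lemmas: the Lie center is zero, so there are no non-zero idempotents by Lemma~\ref{lem:idempotents-central}; hence $P^2\neq P$ by Lemma~\ref{lem:square-equals-algebra}; hence $P^2\subseteq Fb$ by Lemma~\ref{lem:basic-poisson-ideals}. This is a legitimate alternative to the paper's direct computation with all six structure constants. In characteristic different from $2$ it finishes the job, since $[ab,b]=b^2$, $[a^2,a]=0$ and $[a^2,b]=2ab$ give $\gamma=\alpha=0$ and $2\beta=0$.

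The gap is $\beta$ in characteristic $2$. Your idempotent claim is false. Every square lies in $P^2\subseteq Fb$, so no element outside $Fb$, in particular no $\beta^{-1}a+\mu b$, can be idempotent, and elements of $Fb$ square to $0$. The fallbacks you list give nothing: $[b^2,a]=2b[b,a]$ reads $0=0$, and $(ab)a=a(ab)$ is just commutativity. In fact, in characteristic $2$ the operations $[a,b]=b$, $a^2=b^2=0$, $ab=\beta b$ satisfy the Leibniz identity for \emph{every} $\beta$ and have no non-zero idempotents, so neither tool can settle it. What is needed is associativity: $a(ab)=(a^2)b=0$ gives $\beta^2b=0$, hence $\beta=0$. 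This is exactly the relation $(a^2)b=a(ab)$ the paper uses. Without it, Corollary~\ref{cor:dimension-two-no-mixed} remains unproved in characteristic $2$.

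The abelian case $\dim_F P^2=1$, $c^2=0$ is also left open, and the same kind of one-line associativity check closes it. Keep $c$ as a basis vector: choose $a\notin Fc$ and write $a^2=\alpha c$, $ac=\beta c$. Then $(a^2)c=a(ac)$ gives $0=\beta^2c$, so $\beta=0$ and $P^2=Fa^2$, which forces $a^2\neq0$. So the situation where all squares vanish never arises, and you land in (ii) with $b=a^2$. Your feared characteristic-$2$ configuration $a^2=b^2=0$, $ab=c\neq0$ fails the same way. Writing $c=\lambda a+\mu b$, you get $\mu c=ac=a^2b=0$ and $\lambda c=bc=ab^2=0$, so $c=0$.

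With these two associativity relations supplied, the rest of your outline goes through. Read ``nilpotency'' among your invariants as the existence of non-zero nilpotent elements; that is what separates (v) from (vi).
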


\begin{proof}
Suppose first that the associated Lie algebra $P(+,[\, ,\,])$ is
non-abelian. Since the Lie multiplication is alternating and
$\dim_F P=2$, its derived algebra has dimension $1$. We may therefore
choose a basis $\{a,b\}$ such that
\[
[a,b]=b.
\]

Write the associative multiplication in the form
\[
a^2=\alpha a+\beta b,\qquad
ab=\gamma a+\delta b,\qquad
b^2=\varepsilon a+\eta b.
\]
Since
\[
[a^2,a]=0,
\]
we obtain $\beta=0$. Next, the Leibniz identity gives
\[
[ab,a]=a[b,a],
\]
and hence $\gamma=0$. Thus
\[
a^2=\alpha a,\qquad
ab=\delta b.
\]
Applying the Leibniz identity to $[ab,b]$, we obtain
\[
b^2=0.
\]

Finally,
\[
[a^2,b]=2a[a,b]
\]
gives
\[
\alpha=2\delta.
\]
On the other hand, associativity of
\[
(a^2)b=a(ab)
\]
gives
\[
\alpha\delta=\delta^2.
\]
Substituting $\alpha=2\delta$, we obtain
\[
\delta^2=0.
\]
Since $F$ is a field, $\delta=0$, and consequently $\alpha=0$.
Thus the associative multiplication is trivial and
\[
P\cong P_{2,7}.
\]
The argument remains valid in every characteristic.

Suppose now that $P(+,[\, ,\,])$ is abelian. Then the Lie
multiplication is zero, and it remains to classify the commutative
associative algebra $P(+,\cdot)$.

If
\[
P^2=\langle0\rangle,
\]
then all associative products vanish and
\[
P\cong P_{2,1}.
\]

Let
\[
\dim_F P^2=1.
\]
Put
\[
P^2=Fb
\]
and choose $a\notin Fb$. Then
\[
a^2=\alpha b,\qquad
ab=\beta b,\qquad
b^2=\gamma b.
\]
Associativity of
\[
(a^2)b=a(ab)
\]
gives
\[
\alpha\gamma=\beta^2.
\]

If $\gamma=0$, then $\beta=0$. Since $P^2\neq\langle0\rangle$, we
have $\alpha\neq0$. Replacing $b$ by $\alpha b$, we obtain
\[
a^2=b,
\]
and therefore
\[
P\cong P_{2,2}.
\]

Suppose that $\gamma\neq0$. Put
\[
e=\gamma^{-1}b.
\]
Then
\[
e^2=e,\qquad ae=\beta e.
\]
Moreover, $\alpha\gamma=\beta^2$ gives
\[
a^2=\beta^2e.
\]
For
\[
c=a-\beta e
\]
we therefore have
\[
ec=0,\qquad c^2=0.
\]
Hence
\[
P\cong P_{2,3}.
\]

It remains to consider
\[
P^2=P.
\]
By Lemma~\ref{lem:square-equals-algebra}, the commutative associative
algebra $P(+,\cdot)$ has an identity element $e$.

Choose $x\notin Fe$. Then
\[
P=Fe\oplus Fx
\]
and
\[
x^2=\alpha e+\beta x
\]
for suitable $\alpha,\beta\in F$. Consequently,
\[
P(+,\cdot)
\cong
F[t]/(t^2-\beta t-\alpha).
\]
Indeed, the homomorphism
\[
F[t]\longrightarrow P
\]
defined by
\[
1\longmapsto e,\qquad
t\longmapsto x
\]
is surjective, and both the indicated quotient and $P$ have dimension
$2$ over $F$.

Put
\[
f(t)=t^2-\beta t-\alpha.
\]
If $f$ has two distinct roots in $F$, then
\[
F[t]/(f(t))\cong F\times F,
\]
which gives $P_{2,4}$.

If
\[
f(t)=(t-\lambda)^2
\]
for some $\lambda\in F$, then
\[
u=x-\lambda e
\]
satisfies
\[
u^2=0,
\]
and we obtain $P_{2,5}$.

Finally, if $f$ is irreducible over $F$, then
\[
F[t]/(f(t))
\]
is a quadratic field extension of $F$, which gives $P_{2,6}(E)$.

All the algebras listed above clearly satisfy the Poisson identities.
It remains to distinguish their isomorphism types.

The algebra $P_{2,7}$ is the only one with non-zero Lie
multiplication. Among the remaining algebras,
\[
\dim_F P_{2,1}^{\,2}=0,
\]
\[
\dim_F P_{2,2}^{\,2}
=
\dim_F P_{2,3}^{\,2}
=1,
\]
while
\[
P_{2,j}^{\,2}=P_{2,j}
\qquad (j=4,5,6).
\]
The algebra $P_{2,2}$ has no non-zero idempotents, whereas
$P_{2,3}$ contains the idempotent $e$.

Finally, $P_{2,4}$ has two non-zero proper ideals, $P_{2,5}$ has a
unique non-zero proper ideal, and $P_{2,6}(E)$ has no non-zero proper
ideals. Hence no further isomorphisms are possible.
\end{proof}

\begin{corollary}
\label{cor:dimension-two-no-mixed}
There is no two-dimensional Poisson algebra over an arbitrary field
with both multiplications non-zero.
\end{corollary}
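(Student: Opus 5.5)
The corollary should follow directly from Theorem~\ref{thm:dimension-two}, so I will read it off the classification rather than recompute anything. Let $P$ be a two-dimensional Poisson algebra over $F$. By Theorem~\ref{thm:dimension-two}, $P$ is isomorphic to one of $P_{2,1},\ldots,P_{2,5}$, $P_{2,6}(E)$ or $P_{2,7}$. An isomorphism of Poisson algebras preserves both multiplications, so it carries zero Lie multiplication to zero Lie multiplication and zero associative multiplication to zero associative multiplication. It therefore suffices to check the property on each listed algebra.

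In types (i)--(vi) the Lie multiplication is zero by definition. In type (vii), $P_{2,7}$, the associative multiplication is zero. Hence in every case at least one of the two operations vanishes identically, and the same holds for $P$.

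A second route avoids the full list and uses only the first half of the proof of Theorem~\ref{thm:dimension-two}. If the Lie multiplication is non-zero, choose a basis with $[a,b]=b$. The Leibniz identity then forces $\beta=\gamma=0$, $b^2=0$ and $\alpha=2\delta$. Associativity gives $\delta^2=0$, so $\delta=0$ and then $\alpha=0$, and the associative product vanishes. None of these steps divides by $2$, so the argument holds in every characteristic, including characteristic $2$.

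There is no real obstacle here. The only point needing care is that the computation in the theorem is characteristic-free, and the proof already notes this explicitly.
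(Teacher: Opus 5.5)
Your proposal is correct and matches the paper's proof: the corollary is read off from Theorem~\ref{thm:dimension-two}, where types (i)--(vi) have zero Lie multiplication and $P_{2,7}$ has zero associative multiplication. Your second route is exactly the characteristic-free computation in the first half of that theorem's proof, so it adds nothing beyond it.
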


\begin{proof}
By Theorem~\ref{thm:dimension-two}, every two-dimensional Poisson
algebra has either trivial Lie multiplication or trivial associative
multiplication.
\end{proof}

We now record the internal structure of the algebras obtained above.
Their associative squares and annihilators are as follows:
\[
\begin{array}{c|c|c}
P & P^2 & \operatorname{Ann}(P(+,\cdot))\\
\hline
P_{2,1} & \langle0\rangle & P_{2,1}\\
P_{2,2} & Fb & Fb\\
P_{2,3} & Fe & Fb\\
P_{2,4} & P_{2,4} & \langle0\rangle\\
P_{2,5} & P_{2,5} & \langle0\rangle\\
P_{2,6}(E) & P_{2,6}(E) & \langle0\rangle\\
P_{2,7} & \langle0\rangle & P_{2,7}
\end{array}
\]

The ideal structure also separates the different types. Every
subspace of $P_{2,1}$ is a Poisson ideal. The algebra $P_{2,2}$ has
the unique non-zero proper Poisson ideal $Fb$ and is indecomposable.
The non-zero proper Poisson ideals of $P_{2,3}$ are $Fe$ and $Fb$,
and
\[
P_{2,3}=Fe\oplus Fb
\]
is a direct sum of Poisson ideals.

The algebra $P_{2,4}$ is isomorphic to $F\times F$ and decomposes as
\[
P_{2,4}=Fe_1\oplus Fe_2.
\]
The algebra $P_{2,5}$ has the unique non-zero proper Poisson ideal
$Fu$ and is indecomposable. Since $P_{2,6}(E)(+,\cdot)$ is a field,
$P_{2,6}(E)$ has no non-zero proper Poisson ideals.

For $P_{2,7}$, the subspace $Fb$ is the unique non-zero proper Poisson
ideal, and hence this algebra is indecomposable. Its associated Lie
algebra satisfies
\[
[P_{2,7},P_{2,7}]=Fb,\qquad
\zeta(P_{2,7}(+,[\, ,\,]))=\langle0\rangle.
\]
Moreover,
\[
[Fb,P_{2,7}]=Fb,
\qquad
[Fb,Fb]=\langle0\rangle.
\]
Consequently, the associated Lie algebra is solvable but not
nilpotent.

For $1\leq j\leq6$, the Lie multiplication of $P_{2,j}$ is zero.
Therefore
\[
[P_{2,j},P_{2,j}]=\langle0\rangle,\qquad
\zeta(P_{2,j}(+,[\, ,\,]))=P_{2,j},
\]
and all these associated Lie algebras are abelian.

When $F=\mathbb C$, the quadratic-extension case $P_{2,6}(E)$
disappears, and Theorem~\ref{thm:dimension-two} reduces to the known
classification of two-dimensional complex Poisson algebras.

\section{Three-dimensional Poisson algebras}
\label{sec:dimension-three}

We now turn to the three-dimensional case. In contrast with dimension
two, Poisson algebras with both multiplications non-zero already occur
in dimension three. Moreover, over an arbitrary field several
additional phenomena arise which disappear over algebraically closed
fields.

The classification will be organized primarily according to the
dimension of the derived algebra
\[
[P,P]
\]
of the associated Lie algebra. Thus we consider successively the cases
\[
\dim_F[P,P]=0,1,2,3.
\]
When the Lie multiplication is trivial, the classification is further
organized according to $\dim_F P^2$.

The elementary properties of $P^2$, the associative annihilator and
idempotents established in Section~\ref{sec:preliminaries} will be
used throughout without further mention.

\subsection{The case of trivial Lie multiplication}

We first consider
\[
[P,P]=\langle0\rangle.
\]
Then the Poisson structure is completely determined by the
three-dimensional commutative associative algebra $P(+,\cdot)$.
The natural invariant in this case is $\dim_F P^2$.

\begin{theorem}
\label{thm:3d-trivial-lie}
Let $P$ be a three-dimensional Poisson algebra over $F$ with
\[
[P,P]=\langle0\rangle.
\]
Then $P$ is isomorphic to exactly one of the following algebras,
subject only to the parameter identifications stated below. All
associative products not explicitly displayed are zero.

\begin{enumerate}
\item[\rm (i)]
$P_{3,1}$:
\[
P^2=\langle0\rangle.
\]

\item[\rm (ii)]
$P_{3,2}=Fe\oplus Fx\oplus Fy$:
\[
e^2=e.
\]

\item[\rm (iii)]
$P_{3,3}=Fx\oplus Fy\oplus Fz$:
\[
x^2=z.
\]

\item[\rm (iv)]
$P_{3,4}(M)=Fx\oplus Fy\oplus Fz$:
\[
x^2=\alpha z,\qquad
xy=\beta z,\qquad
y^2=\gamma z,
\]
where
\[
M=
\begin{pmatrix}
\alpha&\beta\\
\beta&\gamma
\end{pmatrix},
\qquad
\det M\neq0.
\]

\item[\rm (v)]
$P_{3,5}=Fx\oplus Fy\oplus Fz$:
\[
x^2=y,\qquad
xy=z.
\]

\item[\rm (vi)]
$P_{3,6}=Fe\oplus Fx\oplus Fy$:
\[
e^2=e,\qquad
x^2=y.
\]

\item[\rm (vii)]
$P_{3,7}=Fe_1\oplus Fe_2\oplus Fz$:
\[
e_1^2=e_1,\qquad
e_2^2=e_2.
\]

\item[\rm (viii)]
$P_{3,8}=Fe\oplus Fu\oplus Fz$:
\[
e^2=e,\qquad
eu=u,\qquad
u^2=0.
\]

\item[\rm (ix)]
$P_{3,9}(E)=E\oplus Fz$, where $E/F$ is a quadratic field
extension, the multiplication on $E$ is its ordinary field
multiplication, and
\[
zP=\langle0\rangle.
\]

\item[\rm (x)]
\[
P_{3,10}\cong F\times F\times F.
\]

\item[\rm (xi)]
\[
P_{3,11}\cong
F\times F[\varepsilon]/(\varepsilon^2).
\]

\item[\rm (xii)]
\[
P_{3,12}(E)\cong F\times E,
\]
where $E/F$ is a quadratic field extension.

\item[\rm (xiii)]
\[
P_{3,13}(K)=K,
\]
where $K/F$ is a cubic field extension.

\item[\rm (xiv)]
$P_{3,14}=Fe\oplus Fu\oplus Fv$:
\[
e^2=e,\qquad
eu=u,\qquad
ev=v,
\]
and
\[
u^2=uv=v^2=0.
\]

\item[\rm (xv)]
$P_{3,15}=Fe\oplus Fu\oplus Fv$:
\[
e^2=e,\qquad
eu=u,\qquad
ev=v,\qquad
u^2=v,
\]
with
\[
uv=v^2=0.
\]
Equivalently,
\[
P_{3,15}(+,\cdot)\cong F[t]/(t^3).
\]
\end{enumerate}

In case {\rm (iv)}, two matrices $M$ and $M'$ determine isomorphic
algebras if and only if
\[
M'=\lambda C^TMC
\]
for some $C\in GL_2(F)$ and $\lambda\in F^\times$.

In cases {\rm (ix)} and {\rm (xii)}, the parameter $E$ is taken up to
$F$-algebra isomorphism, and in case {\rm (xiii)} the cubic extension
$K/F$ is taken up to $F$-algebra isomorphism.
\end{theorem}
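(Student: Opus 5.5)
Since $[P,P]=\langle0\rangle$, the Leibniz identity is vacuous, so the task is to classify three-dimensional commutative associative algebras $A=P(+,\cdot)$ over $F$ up to isomorphism. I will split by $d=\dim_F A^2\in\{0,1,2,3\}$, mirroring the proof of Theorem~\ref{thm:dimension-two}. Non-isomorphism will then be shown with the invariants $\dim A^2$, $\dim A^3$, $\dim\operatorname{Ann}(A)$, the existence of idempotents and of an identity, the number of maximal ideals, and the residue fields.

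\textbf{Case $d=0$.} This gives $P_{3,1}$.

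\textbf{Case $d=1$.} Write $A^2=Fz$ with $z^2=\gamma z$.

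If $\gamma\neq0$, rescale $z$ to an idempotent $e$. Then $A=Fe\oplus\ker(x\mapsto ex)$ with $ex=\lambda_x e$. Replacing $x$ by $x-\lambda_x e$ gives a complement $N$ with $eN=0$ and $N^2\subseteq Fe\cap N$. Associativity then forces $N^2=0$: from $x^2=\mu e$ we get $\mu e=ex^2=0$. This yields $P_{3,2}$.

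If $\gamma=0$, then $A^2=Fz$ is nilpotent and $zA\subseteq Fz$. The relation $(xz)y=x(zy)$ shows that $zA=0$, just as in dimension two. So the multiplication is a symmetric bilinear form $M$ on $A/Fz$ with values in $Fz$.
\begin{itemize}
\item Rank $1$: diagonalize to get $P_{3,3}$. In characteristic $2$ a non-alternating form of rank $1$ still has the form $x^2=z$.
\item Rank $2$: this is $P_{3,4}(M)$.
\end{itemize}
A change of basis acts as $C^TMC$, and rescaling $z$ introduces the factor $\lambda$, which gives the stated equivalence. Conversely, an isomorphism preserves $A^2$, so it induces such $(C,\lambda)$. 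To separate $P_{3,3}$ from $P_{3,4}$, I will use $\dim\operatorname{Ann}$, which is $2$ versus $1$.

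\textbf{Case $d=2$.} If $A$ is nilpotent, then $A\supsetneq A^2\supsetneq A^3\supsetneq 0$, so $\dim A/A^2=1$ and $A$ is generated by a single element $x$. This gives $x^2=y$, $x^3=z$, $x^4=0$, which is $P_{3,5}$.

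If $A$ is not nilpotent, it contains an idempotent $e$ (Fitting lemma for multiplication by a non-nilpotent element). Peirce decomposition then gives $A=eA\oplus A_0$ as a direct sum of ideals, with $A_0=\{a-ea\}$ and $A_0\cdot eA=0$. Here $eA$ is a unital algebra of dimension $1$ or $2$, and $A_0$ is an algebra of dimension $2$ or $1$. I will use $\dim A^2=2$ together with Theorem~\ref{thm:dimension-two} and the one-dimensional theorem to list the possibilities:
\begin{itemize}
\item $eA=F$ and $A_0\cong P_{2,2}$: this is $P_{3,6}$.
\item $eA$ two-dimensional unital and $A_0=P_{1,1}$: this gives $P_{3,7}$, $P_{3,8}$, $P_{3,9}(E)$.
\end{itemize}
If $A_0\cong P_{2,3}$, I will choose $e$ to be a maximal idempotent so that $A_0$ contains no idempotent; otherwise the case is already covered.

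\textbf{Case $d=3$.} By Lemma~\ref{lem:square-equals-algebra}, $A$ is unital and therefore Artinian. Hence $A$ is a product of local algebras. The possible splittings of the dimension are $1+1+1$, $1+2$, and $3$.
\begin{itemize}
\item The two-dimensional local factor is either $F[\varepsilon]/(\varepsilon^2)$ or a quadratic field $E$. Together with $1+1+1$, this gives $P_{3,10}$, $P_{3,11}$, $P_{3,12}(E)$.
\item For a three-dimensional local algebra with maximal ideal $\mathfrak m$, the residue field $A/\mathfrak m$ has degree $1$ or $3$. Degree $2$ is impossible because $\mathfrak m$ would then be an $A/\mathfrak m$-module of dimension $1$ over $F$.
\item Degree $3$ gives $P_{3,13}(K)$.
\item Degree $1$ gives $A=F1\oplus\mathfrak m$ with $\mathfrak m$ a two-dimensional nilpotent algebra, which is either $P_{2,1}$ or $P_{2,2}$. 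These give $P_{3,14}$ and $P_{3,15}$ respectively.
\end{itemize}

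\textbf{Main obstacle.} I expect the non-nilpotent subcase of $d=2$ to be the hardest step. It needs a clean Peirce-decomposition argument, valid in every characteristic, showing that the idempotent splitting is canonical enough to make the list exhaustive and irredundant. In particular, I must show that the decomposition into unital part and nilpotent part is unique: take $e$ to be the identity of the unital part, i.e.\ the maximal idempotent. This uniqueness is what lets the isomorphism classes of the factors serve as invariants.
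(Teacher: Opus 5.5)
Your proposal is correct up to one misstated step. In the case $A^2=Fz$, $z^2=0$, write $za=\varphi(a)z$. The relation $(xz)y=x(zy)$ you cite is vacuous, because both sides equal $\varphi(x)\varphi(y)z$. The instance you need is $(xy)z=x(yz)$. Its left side is a multiple of $z^2=0$, so $\varphi(x)\varphi(y)=0$ for all $x,y$, and taking $x=y$ gives $\varphi=0$. This is the paper's identity $\lambda(a,b)\varphi(z)=\varphi(a)\varphi(b)$, and it is also what the dimension-two computation $(a^2)b=a(ab)$ actually uses.

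Otherwise your argument shares the paper's skeleton: split by $\dim_F A^2$, use the splitting $A=eA\oplus\ker L_e$ when $\dim_F A^2=2$, and decompose into local factors when $A^2=A$. Three local steps differ.

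\begin{itemize}
\item For $\dim_F A^2=2$ you split by nilpotency rather than by the existence of a non-zero idempotent. This costs you a standard fact: a non-nilpotent finite-dimensional commutative algebra has a non-nilpotent element, hence a non-zero idempotent. In exchange, $P_{3,5}$ follows immediately from $A=Fx+A^2$. The paper never needs that fact, but it must rule out $A^2\cong P_{2,2}$ and analyse the operator $b\mapsto xb$ by hand.
\item In the local case with residue field $F$, you identify $A$ with the unitization of a two-dimensional nilpotent algebra ($P_{2,1}$ or $P_{2,2}$). This replaces the paper's computation producing $u\in N$ with $u^2\neq0$.
\item Your irredundancy plan is more systematic than the paper's. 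The paper uses the annihilator quotient for $P_{3,9}(E)$ and otherwise only lists invariants in the proof of Theorem~\ref{thm:3d-classification}.
\end{itemize}

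The obstacle you anticipate in the third point is mild. For idempotents $e,f$ of a commutative algebra, $g=e+f-ef$ is an idempotent with $eg=e$ and $fg=f$. Hence there is a unique maximal idempotent, and the isomorphism types of $eA$ and $\ker L_e$ are invariants in every characteristic. These invariants separate $P_{3,5},\ldots,P_{3,9}(E)$ and recover $E$. The number of maximal ideals, the residue fields and $\dim_F\mathfrak{m}^2$ likewise separate $P_{3,10},\ldots,P_{3,15}$.
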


\begin{proof}
Put
\[
A=P(+,\cdot).
\]

Suppose first that
\[
A^2=\langle0\rangle.
\]
Then all products vanish and we obtain $P_{3,1}$.

Let
\[
\dim_F A^2=1
\]
and write
\[
A^2=Fz.
\]
Suppose first that
\[
zA\neq\langle0\rangle.
\]
There is a non-zero linear functional
\[
\varphi:A\longrightarrow F
\]
such that
\[
za=\varphi(a)z
\]
for every $a\in A$. Write
\[
ab=\lambda(a,b)z.
\]
Associativity gives
\[
(ab)z=a(bz),
\]
and therefore
\[
\lambda(a,b)\varphi(z)
=
\varphi(a)\varphi(b).
\]
Since $\varphi\neq0$, it follows that $\varphi(z)\neq0$.

Put
\[
e=\varphi(z)^{-1}z.
\]
Then
\[
e^2=e
\]
and, after the corresponding normalization of $\varphi$,
\[
ab=\varphi(a)\varphi(b)e.
\]
Thus
\[
\ker\varphi
\]
is a two-dimensional associative annihilator, and we obtain
$P_{3,2}$.

Suppose now that
\[
zA=\langle0\rangle.
\]
Then
\[
A^3=\langle0\rangle.
\]
Choose $x,y$ such that
\[
A=Fx\oplus Fy\oplus Fz.
\]
We have
\[
x^2=\alpha z,\qquad
xy=\beta z,\qquad
y^2=\gamma z.
\]
Thus the multiplication is determined by the non-zero symmetric matrix
\[
M=
\begin{pmatrix}
\alpha&\beta\\
\beta&\gamma
\end{pmatrix}.
\]

If
\[
\operatorname{rank}M=1,
\]
a suitable change of basis gives
\[
x^2=z,
\]
which yields $P_{3,3}$.

If
\[
\operatorname{rank}M=2,
\]
we obtain $P_{3,4}(M)$.

Let next
\[
\dim_F A^2=2.
\]

Assume first that $A$ contains a non-zero idempotent $e$. Multiplication
by $e$ defines an idempotent linear operator
\[
L_e:A\longrightarrow A,\qquad
L_e(a)=ea.
\]
Hence
\[
A=eA\oplus\ker L_e.
\]
Both summands are ideals, and their product is zero.

Since $\dim_F A=3$ and $\dim_F A^2=2$, we have
\[
\dim_F eA=1
\qquad\text{or}\qquad
\dim_F eA=2.
\]

Suppose first that
\[
\dim_F eA=1.
\]
Put
\[
B=\ker L_e.
\]
Then
\[
A=Fe\oplus B,\qquad
\dim_F B=2.
\]
Since
\[
A^2=Fe\oplus B^2
\]
and $\dim_F A^2=2$, we obtain
\[
\dim_F B^2=1.
\]
By the two-dimensional classification established in
Theorem~\ref{thm:dimension-two}, the algebra $B$ is either the algebra
with multiplication
\[
x^2=y
\]
or the direct sum of a one-dimensional idempotent algebra and a
one-dimensional zero algebra. These possibilities give $P_{3,6}$ and
$P_{3,7}$.

Suppose now that
\[
\dim_F eA=2.
\]
Then
\[
A=B\oplus Fz,
\]
where
\[
B=eA
\]
is a two-dimensional unital commutative associative algebra and
\[
Bz=\langle0\rangle.
\]
Necessarily
\[
z^2=0.
\]
Indeed, if $z^2\neq0$, then a non-zero scalar multiple of $z$ is an
idempotent. The sum of this idempotent and the identity of $B$ would
then be an identity of $A$, and consequently $A^2=A$, contrary to
$\dim_F A^2=2$.

By Theorem~\ref{thm:dimension-two}, the three possibilities for $B$
are
\[
F\times F,\qquad
F[\varepsilon]/(\varepsilon^2),
\qquad
E,\quad [E:F]=2.
\]
They yield $P_{3,7}$, $P_{3,8}$ and $P_{3,9}(E)$, respectively.

It remains to consider
\[
\dim_F A^2=2
\]
when $A$ has no non-zero idempotents. Put
\[
B=A^2.
\]
Then $B$ is a two-dimensional commutative associative algebra without
non-zero idempotents. By the two-dimensional classification, either
\[
B^2=\langle0\rangle
\]
or there is a basis $\{u,v\}$ of $B$ such that
\[
u^2=v,\qquad
uv=v^2=0.
\]

We show that the second possibility cannot occur. Choose
\[
x\notin B
\]
and consider the linear operator
\[
T:B\longrightarrow B,\qquad
T(b)=xb.
\]
Write
\[
T(u)=\alpha u+\beta v.
\]
Associativity gives
\[
T(v)=T(u^2)=T(u)u=\alpha v.
\]
If
\[
x^2=\gamma u+\delta v,
\]
then
\[
T^2(u)=x^2u.
\]
The left-hand side is
\[
T^2(u)
=
\alpha^2u+2\alpha\beta v,
\]
whereas
\[
x^2u=\gamma v.
\]
Thus
\[
\alpha=0,\qquad
\gamma=0.
\]
Consequently
\[
xB\subseteq Fv,\qquad
x^2\in Fv,\qquad
B^2=Fv,
\]
and hence
\[
A^2\subseteq Fv,
\]
contrary to $\dim_F A^2=2$.

Therefore
\[
B^2=\langle0\rangle.
\]
Again let
\[
T:B\longrightarrow B,\qquad T(b)=xb.
\]
Since $x^2\in B$ and $B^2=\langle0\rangle$, associativity gives
\[
T^2=0.
\]
Moreover,
\[
B=A^2=\operatorname{span}_F\{x^2,T(B)\}.
\]
Therefore
\[
\operatorname{rank}T=1
\]
and
\[
x^2\notin\operatorname{Im}T.
\]
Since a non-zero nilpotent operator on a two-dimensional vector space
satisfies
\[
\operatorname{Im}T=\ker T,
\]
we have
\[
T(x^2)\neq0.
\]
Put
\[
y=x^2,\qquad
z=xy.
\]
Then $\{x,y,z\}$ is a basis of $A$ and
\[
x^2=y,\qquad
xy=z,
\]
with all remaining products zero. This is $P_{3,5}$.

Finally, suppose
\[
A^2=A.
\]
By Lemma~\ref{lem:square-equals-algebra}, the algebra $A$ has an
identity element.

Suppose first that $A$ contains a non-trivial idempotent $e$. Then
\[
A=eA\oplus(1-e)A.
\]
The dimensions of these two ideals are $1$ and $2$. Using the
two-dimensional unital classification, we obtain
\[
F^3,\qquad
F\times F[\varepsilon]/(\varepsilon^2),
\qquad
F\times E,
\]
where $E/F$ is a quadratic field extension. These are
$P_{3,10}$, $P_{3,11}$ and $P_{3,12}(E)$.

Suppose now that $A$ has no non-trivial idempotents. Since $A$ is a
finite-dimensional commutative unital algebra over $F$, it is an
Artinian algebra. By the standard decomposition theorem for
commutative Artinian algebras,
\[
A\cong A_1\times\cdots\times A_r,
\]
where each $A_i$ is a local Artinian algebra. If $r\geq2$, then the
element
\[
(1,0,\ldots,0)
\]
is a non-trivial idempotent of $A$, contrary to our assumption.
Therefore $r=1$, and hence $A$ is local.

Let
\[
N=\operatorname{Nil}(A).
\]
For a commutative Artinian algebra, the nilradical coincides with the
Jacobson radical. Since $A$ is local, this radical is its unique
maximal ideal. Consequently,
\[
A/N
\]
is a field.

If
\[
N=\langle0\rangle,
\]
then $A$ itself is a field of dimension $3$ over $F$. Hence
\[
A=K,
\qquad
[K:F]=3,
\]
which gives $P_{3,13}(K)$.

The case
\[
\dim_F N=1
\]
cannot occur. Indeed, let
\[
N=Fn.
\]
Since $N$ is nilpotent,
\[
n^2=0.
\]
Multiplication on $N$ induces a unital $F$-algebra homomorphism
\[
A/N\longrightarrow\operatorname{End}_F(N)\cong F.
\]
But in this case $A/N$ would be a quadratic field extension of $F$.
A unital homomorphism from a field is injective, which is impossible
because a two-dimensional extension of $F$ cannot embed into $F$ as an
$F$-algebra.

Thus
\[
\dim_F N=2
\]
and
\[
A/N\cong F.
\]

If
\[
N^2=\langle0\rangle,
\]
we obtain $P_{3,14}$.

Assume
\[
N^2\neq\langle0\rangle.
\]
Then
\[
\dim_F N^2=1.
\]
We first show that there exists $u\in N$ such that
\[
u^2\neq0.
\]
Suppose, on the contrary, that every square in $N$ is zero, and let
$\{u,v\}$ be a basis of $N$. Write
\[
uv=\alpha u+\beta v.
\]
Associativity gives
\[
u(uv)=(u^2)v=0
\]
and
\[
v(uv)=u(v^2)=0.
\]
Hence
\[
\beta uv=0,\qquad
\alpha uv=0.
\]
If $uv\neq0$, then $\alpha=\beta=0$, a contradiction. Thus
\[
uv=0,
\]
which implies
\[
N^2=\langle0\rangle,
\]
again a contradiction.

Therefore there exists $u\in N$ with $u^2\neq0$. Put
\[
v=u^2.
\]
Since $N$ is nilpotent and $\dim_F N^2=1$, we have
\[
N^3=\langle0\rangle.
\]
Consequently
\[
uv=v^2=0,
\]
and $A$ is $P_{3,15}$.

It remains to determine the equivalence in the family $P_{3,4}(M)$.
For these algebras,
\[
\operatorname{Ann}(A)=Fz.
\]
Therefore every algebra isomorphism induces an invertible linear
transformation on
\[
A/Fz
\]
and sends $z$ to a non-zero scalar multiple of the corresponding
generator. It follows that
\[
M'=\lambda C^TMC
\]
for some
\[
C\in GL_2(F),\qquad
\lambda\in F^\times.
\]
Conversely, every such transformation determines an algebra
isomorphism.

For $P_{3,9}(E)$ we have
\[
\operatorname{Ann}(P_{3,9}(E)(+,\cdot))=Fz
\]
and hence
\[
P_{3,9}(E)/
\operatorname{Ann}(P_{3,9}(E)(+,\cdot))
\cong E.
\]
Thus two algebras of this type are isomorphic precisely when the
corresponding quadratic extensions are isomorphic as $F$-algebras.

For $P_{3,12}(E)=F\times E$, the quadratic field $E$ is the
two-dimensional simple ideal, so its $F$-algebra isomorphism type is
also preserved. For $P_{3,13}(K)$, the statement is immediate, since
the whole associative algebra is the field $K$.
\end{proof}

For the algebras in Theorem~\ref{thm:3d-trivial-lie}, the associated
Lie algebra is abelian and therefore
\[
\zeta(P(+,[\, ,\,]))=P.
\]
Their associative squares are grouped as follows:
\[
\begin{array}{c|c}
\dim_F P^2 & \text{algebras}\\
\hline
0 & P_{3,1}\\
1 & P_{3,2},\,P_{3,3},\,P_{3,4}(M)\\
2 & P_{3,5},\,P_{3,6},\,P_{3,7},\,P_{3,8},\,P_{3,9}(E)\\
3 & P_{3,10},\,P_{3,11},\,P_{3,12}(E),\,P_{3,13}(K),
    \,P_{3,14},\,P_{3,15}.
\end{array}
\]
In particular,
\[
\operatorname{Ann}(P(+,\cdot))=\langle0\rangle
\]
for $P_{3,10},\ldots,P_{3,15}$, since these algebras are unital.

\subsection{One-dimensional derived Lie algebra}

We now suppose
\[
\dim_F[P,P]=1.
\]
There are two essentially different situations according as the
derived algebra is central or non-central.

\subsubsection*{The central case}

Suppose first that
\[
[P,P]\subseteq\zeta(P(+,[\, ,\,])).
\]
A basis $\{x,y,z\}$ may then be chosen so that
\[
[x,y]=z
\]
and all other basic Lie brackets vanish.

\begin{theorem}
\label{thm:3d-central-derived}
Let $P$ be a three-dimensional Poisson algebra such that
\[
\dim_F[P,P]=1,
\qquad
[P,P]\subseteq\zeta(P(+,[\, ,\,])).
\]
Then $P$ has a basis $\{x,y,z\}$ in which
\[
[x,y]=z
\]
and
\[
x^2=\alpha z,\qquad
xy=\beta z,\qquad
y^2=\gamma z,
\]
while
\[
xz=yz=z^2=0.
\]
We denote this algebra by $P_{3,16}(M)$, where
\[
M=
\begin{pmatrix}
\alpha&\beta\\
\beta&\gamma
\end{pmatrix}.
\]

Two such algebras $P_{3,16}(M)$ and $P_{3,16}(M')$ are isomorphic if
and only if
\[
M'=\frac{1}{\det C}\,C^TMC
\]
for some
\[
C\in GL_2(F).
\]
\end{theorem}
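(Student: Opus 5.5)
Fix a basis $\{x,y,z\}$ with $[x,y]=z$ and $z$ Lie-central. The plan is to show that the associative multiplication is forced into the stated shape, using only the Leibniz identity and associativity. After that, the isomorphism criterion is read off from how isomorphisms act on the Lie structure.

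First, every product lies in $Fz$. The key observation is that Lie-central elements behave well under multiplication: for $c\in\zeta$ and $a,w\in P$, Leibniz gives $[ca,w]=c[a,w]+a[c,w]=c[a,w]$. Taking $c=z$ and using $[P,P]=Fz$, I get $[zP,P]\subseteq zFz=Fz^2$.

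Write $z^2=\mu x+\nu y+\rho z$. Since $[z^2,w]=2z[z,w]=0$, the element $z^2$ is central. The Lie center is exactly $Fz$, because $[ax+by,\cdot]=0$ forces $a=b=0$. Hence $z^2=\rho z$.

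If $\rho\neq0$, then $e=\rho^{-1}z$ is a nonzero idempotent. By Lemma~\ref{lem:idempotents-central} that is consistent, so I need more. Apply Leibniz to $[ez,x]$: we have $[z,x]=[ez,x]=e[z,x]+z[e,x]=0$. So $z$ central gives nothing new, and I will instead use $[e x,y]=e[x,y]+x[e,y]=ez=z$. Writing $ex=\lambda x+\dots$ and using $L_e$ idempotent, $P=eP\oplus\ker L_e$ with both summands ideals. Then $z=[ex,y]$ lies in $[eP,P]$. Since $[eP,P]\subseteq eP$ by Leibniz (indeed $[ea,w]=e[a,w]$), and $[\ker L_e,P]\subseteq\ker L_e$ similarly, the two summands Lie-commute.

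The Lie structure then splits accordingly. But $z\in eP$ and $ez=z$, so $eP\ni e$. A one-dimensional Lie algebra plus anything cannot realize the Heisenberg bracket unless $eP$ contains both $x$- and $y$-components. In that case $eP=P$ and $e$ is an identity. Then $[x,y]=[ex,y]$ holds, and $z=[x,y]=[x\cdot e,y]$; with $e$ central and $z\in Fe$, Leibniz $[x\cdot x,y]=2xz$ etc. gives contradictions. The cleanest route is $z=\rho e$, so $[x,y]\in Fe$ with $e$ the identity, while $[x^2,y]=2x[x,y]=2\rho x$ must lie in $Fz$. This forces $\rho x\in Fz$ when $\operatorname{char}\neq2$; characteristic $2$ needs a separate check via $[xy,y]=x[y,y]+y[x,y]=\rho y$ (no factor 2), giving $\rho y\in Fz$, a contradiction. \emph{This elimination of $z^2\neq0$ is the step I expect to be the main obstacle}, and I would organize it exactly via these identities $[xy,y]=\rho y$ and $[xy,x]=-\rho x$, which hold in all characteristics.

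So $z^2=0$. Next, $[xy,y]=y[x,y]=yz$ and $[xy,x]=-xz$. Writing $xz=\cdot$, the earlier fact $[zP,P]\subseteq Fz^2=0$ shows $zP\subseteq\zeta=Fz$. So $xz=az$ and $yz=bz$, and $L_z$ is nilpotent by associativity, since $z(zx)=z^2x=0$ gives $a^2=0$. Thus $a=b=0$ and $zP=0$. Now $[xy,y]=yz=0$ and $[xy,x]=0$ make $xy$ central, so $xy\in Fz$. Likewise $[x^2,x]=0$ and $[x^2,y]=2xz=0$ give $x^2\in Fz$, and symmetrically $y^2\in Fz$. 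This yields the displayed form, and all identities are then easily verified.

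For the isomorphism statement, any isomorphism $\phi$ preserves $Fz=[P,P]$ and induces $C\in GL_2(F)$ on $P/Fz$. Preserving the bracket forces $\phi(z)=\det C\cdot z'$, and preserving products gives $\det C\,M'=C^TMC$. Conversely, any $C$, lifted by sending $z\mapsto(\det C)z'$, is an isomorphism; the $Fz$-components of the images of $x,y$ are irrelevant since $z$ annihilates everything.
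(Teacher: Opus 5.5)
Your reduction to $z^2=0$ is correct, if circuitous. You do not need the idempotent $e$ or the splitting $P=eP\oplus\ker L_e$, whose case $\dim_F eP=2$ you only gesture at. Leibniz gives $xz=-[xy,x]\in[P,P]=Fz$ and $yz=[xy,y]\in Fz$ directly. Then $z^2=[xz,y]=0$, because $xz$ is Lie-central.

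The genuine gap is the next step. Once you have $xz=az$, $yz=bz$ and $z^2=0$, the operator $L_z$ maps $x\mapsto az$, $y\mapsto bz$, $z\mapsto0$. So $L_z^2=0$ holds for \emph{every} choice of $a,b$: indeed $z(zx)=a\,z^2=0$ identically. The relation $z(zx)=z^2x$ is therefore vacuous and does not yield $a^2=0$. Your later conclusions that $xy$, $x^2$ and $y^2$ lie in $Fz$ all use $xz=yz=0$. So the normal form rests on an unproved step.

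The missing idea is to use Leibniz to pin down $x^2$ \emph{before} you know $xz=0$, and then apply associativity to a product that is not automatically zero. Write $x^2=c_1x+c_2y+c_3z$. Then $[x^2,x]=0$ gives $c_2=0$, and $[x^2,y]=2x[x,y]=2xz=2az$ gives $c_1=2a$, so $x^2=2ax+\alpha z$. Now $(x^2)z=2a\,xz=2a^2z$, while $x(xz)=a\,xz=a^2z$. Hence $a^2=0$ and $a=0$, and this holds in characteristic $2$ as well. Symmetrically, $y^2=2by+\gamma z$ and $(y^2)z=y(yz)$ force $b=0$.

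This is exactly the associativity check the paper performs after writing down the Leibniz-constrained general form of the multiplication. With this repair, the rest of your argument goes through, including the isomorphism criterion.
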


\begin{proof}
Choose a basis $\{x,y,z\}$ with
\[
[x,y]=z,\qquad
[x,z]=[y,z]=0.
\]
Applying the Leibniz identity to the products of the basis elements
shows that the associative multiplication must have the form
\[
\begin{aligned}
x^2&=2\lambda x+\alpha z,\\
xy&=\mu x+\lambda y+\beta z,\\
y^2&=2\mu y+\gamma z,\\
xz&=\lambda z,\qquad
yz=\mu z,\qquad
z^2=0.
\end{aligned}
\]

Associativity gives
\[
(x^2)z=x(xz),
\]
and therefore
\[
2\lambda^2z=\lambda^2z.
\]
Thus
\[
\lambda=0.
\]
Similarly,
\[
(y^2)z=y(yz)
\]
gives
\[
\mu=0.
\]
Hence
\[
x^2=\alpha z,\qquad
xy=\beta z,\qquad
y^2=\gamma z,
\qquad
xz=yz=z^2=0.
\]

Conversely, every associative multiplication of this form is
commutative, associative and compatible with the given Lie bracket.

It remains to determine the isomorphisms. Put
\[
U=(x,y).
\]
Let
\[
\widetilde U=UC+z\ell,
\qquad
C\in GL_2(F),
\]
where $\ell$ is a row vector over $F$. Since
\[
[x,y]=z,
\]
we have
\[
[\widetilde x,\widetilde y]
=
(\det C)z.
\]
Thus, in order to retain the normalized relation
\[
[\widetilde x,\widetilde y]=\widetilde z,
\]
we must put
\[
\widetilde z=(\det C)z.
\]

Since
\[
zP=\langle0\rangle,
\]
the terms involving $z\ell$ do not affect the associative products.
Therefore the matrix of the associative multiplication in the new
basis is
\[
\widetilde M
=
\frac{1}{\det C}\,C^TMC.
\]
Hence two algebras $P_{3,16}(M)$ and $P_{3,16}(M')$ are isomorphic
if and only if
\[
M'
=
\frac{1}{\det C}\,C^TMC
\]
for some $C\in GL_2(F)$.

Equivalently, if
\[
\varphi:P_{3,16}(M)\longrightarrow P_{3,16}(M')
\]
is an isomorphism and $C$ is the matrix induced by $\varphi$ on
$P/Fz$, then
\[
\varphi(z)=(\det C)z'
\]
and direct comparison of the products gives
\[
(\det C)M=C^TM'C.
\]
Replacing $C$ by $C^{-1}$ yields the preceding equivalence relation.
Conversely, every matrix $C\in GL_2(F)$ satisfying this relation
determines a Poisson algebra isomorphism.
\end{proof}

For every $P_{3,16}(M)$,
\[
[P,P]=Fz
=
\zeta(P(+,[\, ,\,])),
\]
and the associated Lie algebra is nilpotent of class $2$.

Moreover,
\[
P^2=
\begin{cases}
\langle0\rangle, & M=0,\\
Fz, & M\neq0,
\end{cases}
\qquad
P^3=\langle0\rangle,
\]
and
\[
\dim_F\operatorname{Ann}(P(+,\cdot))
=
3-\operatorname{rank}M.
\]

\begin{corollary}
\label{cor:3d-central-derived-char-not-two}
Suppose that $\operatorname{char}(F)\neq2$. Then every non-zero
algebra $P_{3,16}(M)$ is isomorphic to one for which
\[
M=
\begin{pmatrix}
1&0\\
0&d
\end{pmatrix},
\qquad
d=\det M.
\]
Consequently, every non-zero algebra in this family can be written in
the form
\[
[x,y]=z,\qquad
x^2=z,\qquad
y^2=dz,
\]
where all remaining associative products are zero.
\end{corollary}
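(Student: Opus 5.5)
The plan is to use the equivalence relation of Theorem~\ref{thm:3d-central-derived},
\[
M\sim \frac{1}{\det C}\,C^TMC,\qquad C\in GL_2(F),
\]
and to proceed in two stages. First diagonalize the symmetric bilinear form $M$ by a congruence. Then correct the resulting factor $1/\det C$ by an extra scaling that preserves the diagonal shape.

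First, since $\operatorname{char}(F)\neq2$ and $M\neq0$, there is a vector $w\in F^2$ with $w^TMw\neq0$. If some diagonal entry of $M$ is non-zero, take the corresponding basis vector. Otherwise $\alpha=\gamma=0$ and $\beta\neq0$, and $w=(1,1)^T$ gives $w^TMw=2\beta\neq0$; this is the only place where the characteristic hypothesis is used.

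Completing $w$ to a basis $w,w'$ with $w'$ $M$-orthogonal to $w$ yields $C_0\in GL_2(F)$ such that $C_0^TMC_0=\operatorname{diag}(a,b)$ with $a\neq0$. Applying Theorem~\ref{thm:3d-central-derived} with $C_0$ gives
\[
M\sim \operatorname{diag}(a',b'),\qquad a'=\frac{a}{\det C_0}\neq0,\quad b'=\frac{b}{\det C_0}.
\]

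Next, take $C=\operatorname{diag}(s,t)$, which sends $\operatorname{diag}(a',b')$ to
\[
\frac{1}{st}\operatorname{diag}(s^2a',\,t^2b')=\operatorname{diag}\!\left(\frac{s}{t}\,a',\ \frac{t}{s}\,b'\right).
\]
Choose $t=1$ and $s=1/a'$. This gives $\operatorname{diag}(1,\,a'b')$, so every non-zero $M$ is equivalent to one of the form $\operatorname{diag}(1,d')$ with $d'=a'b'$.

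It remains to check that $d'=\det M$. Under the relation,
\[
\det\!\left(\frac{1}{\det C}\,C^TMC\right)=\frac{1}{(\det C)^2}\,(\det C)^2\det M=\det M,
\]
so $\det M$ is an invariant of the equivalence class. Hence $d'=\det\operatorname{diag}(1,d')=\det M=d$.

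Finally, reading off the multiplication of $P_{3,16}(\operatorname{diag}(1,d))$ in the basis of Theorem~\ref{thm:3d-central-derived} gives
\[
[x,y]=z,\qquad x^2=z,\qquad y^2=dz,
\]
with all remaining products zero, which is the displayed normal form.

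The one step needing care is getting the scaling right: the factor $1/\det C$ means an ordinary congruence alone cannot normalize the first entry. A diagonal $C$ with unequal entries, as above, absorbs this factor.
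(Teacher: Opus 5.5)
Your proposal is correct and follows essentially the same route as the paper. You find a vector with non-zero quadratic value (by an explicit case check rather than polarization), diagonalize by a congruence (completing to an $M$-orthogonal basis in one step, where the paper uses a unipotent $S$ with $\det S=1$), and absorb the factor $1/\det C$ with a diagonal scaling ($\operatorname{diag}(1/a',1)$ in place of the paper's $\operatorname{diag}(1,a)$). Your explicit check that $\det M$ is invariant under $M\mapsto(\det C)^{-1}C^TMC$ also makes precise a point the paper leaves implicit when it sets $d=\det M$ after its preliminary change of basis.
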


\begin{proof}
Since $\operatorname{char}(F)\neq2$ and $M$ is a non-zero symmetric
matrix, there exists a vector $v$ such that the corresponding
quadratic value is non-zero. Indeed, if all quadratic values were
zero, polarization would give
\[
2B(u,v)=0
\]
for all $u,v$, and hence $B=0$, contrary to $M\neq0$.

After a suitable change of basis, we may therefore assume that
\[
M=
\begin{pmatrix}
a&b\\
b&c
\end{pmatrix},
\qquad
a\neq0.
\]
Put
\[
d=\det M.
\]
For
\[
S=
\begin{pmatrix}
1&-b/a\\
0&1
\end{pmatrix},
\qquad
\det S=1,
\]
we have
\[
S^TMS
=
\begin{pmatrix}
a&0\\
0&d/a
\end{pmatrix}.
\]
Now put
\[
T=
\begin{pmatrix}
1&0\\
0&a
\end{pmatrix}.
\]
Then
\[
\frac{1}{\det T}\,
T^T
\begin{pmatrix}
a&0\\
0&d/a
\end{pmatrix}
T
=
\begin{pmatrix}
1&0\\
0&d
\end{pmatrix}.
\]
Hence the required normal form follows from
Theorem~\ref{thm:3d-central-derived}.
\end{proof}

Here $d=0$ corresponds to a non-zero matrix of rank $1$; the zero
matrix $M=0$ is a separate case.

In characteristic $2$, the matrix formulation of
Theorem~\ref{thm:3d-central-derived} is preferable, since it also
includes alternating symmetric forms, which need not be equivalent to
non-alternating ones.

\subsubsection*{The non-central case}

We next assume
\[
[P,P]\not\subseteq\zeta(P(+,[\, ,\,])).
\]
Then a basis $\{x,y,z\}$ can be chosen so that
\[
[x,y]=y,\qquad
z\in\zeta(P(+,[\, ,\,])).
\]

\begin{theorem}
\label{thm:3d-noncentral-derived}
Let $P$ be a three-dimensional Poisson algebra with
\[
\dim_F[P,P]=1,
\qquad
[P,P]\not\subseteq\zeta(P(+,[\, ,\,])).
\]
Then $P$ is isomorphic to exactly one of the following four algebras.
In all cases
\[
[x,y]=y,
\]
and all Lie brackets not determined by this equality are zero.

\begin{enumerate}
\item[\rm (i)]
$P_{3,17}$: the associative multiplication is zero.

\item[\rm (ii)]
$P_{3,18}$:
\[
x^2=z.
\]

\item[\rm (iii)]
$P_{3,19}$:
\[
z^2=z.
\]

\item[\rm (iv)]
$P_{3,20}$:
\[
z^2=z,\qquad
zx=x,\qquad
zy=y.
\]
\end{enumerate}

All associative products not explicitly displayed are zero.
\end{theorem}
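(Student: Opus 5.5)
First I fix the Lie structure and then solve the Leibniz identity for the associative multiplication. Since $[P,P]=Fy'$ is one-dimensional and not central, some $x$ satisfies $[x,y']\neq0$. Then $[x,y']=\mu y'$ with $\mu\neq0$, and rescaling $x$ gives $[x,y]=y$ with $y=y'$. The centralizer of $y$ contains $y$ and has codimension one, so it equals $Fy\oplus Fz_0$ for some $z_0$. Put $[x,z_0]=\nu y$. Then $z=z_0-\nu y$ satisfies $[x,z]=0$, and $[y,z]=0$ still holds, so $z$ is central. The Lie algebra is therefore $F z\oplus\langle x,y\rangle$, where $\langle x,y\rangle$ is the algebra of $P_{2,7}$. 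Also $\zeta=Fz$, because an element $ax+by+cz$ is central only if $a=b=0$.

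Next I write $x^2,xy,\dots,z^2$ with general coefficients and impose the Leibniz identity $[uv,w]=u[v,w]+v[u,w]$ on basis triples. The endomorphism $D_x=[x,\cdot\,]$ acts diagonally with eigenvalues $0,1,0$ on $x,y,z$. The Leibniz identity says that $D_x$ is a derivation of $A=P(+,\cdot)$, so $A_iA_j\subseteq A_{i+j}$ for its eigenspaces $A_0=Fx\oplus Fz$ and $A_1=Fy$. This is a grading only when $\operatorname{char}F\neq2$. In characteristic $2$ the eigenvalue $2=0$ forces $y^2\in A_0$ rather than $0$, so I will check that case by hand. The remaining relations come from $D_y=[y,\cdot\,]$, which sends $x\mapsto -y$ and kills $y$ and $z$, together with $D_z=0$.

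From these relations I expect the following, in every characteristic:
\begin{itemize}
\item $y^2=0$, obtained from $[y^2,x]=2y[y,x]$ and $[xy,y]$;
\item $xy=\delta y$ and $zy=\epsilon y$;
\item $x^2$ and $xz$ lie in $Fx\oplus Fz$;
\item $D_y$ applied to $x^2$, $xz$ and $z^2$ gives $2xy=(\text{coefficient of }x\text{ in }x^2)\,y$, $zy=(\text{coefficient of }x\text{ in }xz)\,y$, and $z^2\in Fz$.
\end{itemize}

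Then I impose associativity. As in the proof of Theorem~\ref{thm:dimension-two}, comparing $(x^2)y$ with $x(xy)$ gives $\delta^2=2\delta\cdot\delta$, so $\delta=0$ and the $x$-coefficient of $x^2$ is $0$. Hence $x^2=\alpha z$. Similarly $xz=\epsilon x+\kappa z$ with $zy=\epsilon y$, and $z^2=\theta z$. The relations $(xz)z=x(z^2)$, $(x^2)z=x(xz)$ and $(zy)z=(z^2)y$ give $\epsilon^2=\theta\epsilon$, $\alpha\theta=\epsilon^2+\kappa\alpha$ and $\kappa\epsilon=0$, together with a few more relations of the same kind. The point to keep in view is that an idempotent must be Lie-central by Lemma~\ref{lem:idempotents-central}, so any idempotent lies in $Fz$.

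The case split then runs as follows.
\begin{itemize}
\item If $\theta=0$, then $\epsilon=0$. I expect $\kappa=0$ to follow, and $\alpha$ either vanishes, giving $P_{3,17}$, or can be scaled to $1$ via $z\mapsto\alpha z$, giving $P_{3,18}$.
\item If $\theta\neq0$, rescale to $z^2=z$. Then $\epsilon\in\{0,1\}$.
\begin{itemize}
\item $\epsilon=1$ forces $\kappa=0$ and $\alpha=0$, giving $P_{3,20}$ with $z$ acting as the identity.
\item $\epsilon=0$ leaves $xz=\kappa z$ and $x^2=\alpha z$. Replacing $x$ by $x-\kappa z$ keeps $[x,y]=y$ since $z$ is central. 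This kills $\kappa$, and then $x^2=\alpha z$ with $\alpha=\kappa^2$ reduces to $x^2=0$, giving $P_{3,19}$.
\end{itemize}
\end{itemize}

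Non-isomorphism follows from invariants: $\dim P^2$ is $0$, $1$, $1$, $3$, and only $P_{3,19}$ contains a non-zero idempotent while having $\dim P^2=1$.

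I expect the main obstacle to be the characteristic-$2$ bookkeeping, where the grading argument fails and $2xy$ vanishes automatically. There I will verify directly that $y^2=0$ and $\delta=0$, using associativity of $(xy)y=x(y^2)$ and $(x^2)y=x(xy)$ instead of the factor $2$. This is the same mechanism as in Theorem~\ref{thm:dimension-two}, so I expect it to go through.
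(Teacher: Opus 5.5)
Your overall route matches the paper's: normalize the bracket, solve the Leibniz identity through the derivations $[w,\cdot\,]$, impose associativity, and split according to how $z$ acts. Your Leibniz normal form $x^2=2\delta x+\alpha z$, $xy=\delta y$, $xz=\epsilon x+\kappa z$, $zy=\epsilon y$, $y^2=0$, $z^2=\theta z$ is correct.

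The gap is in the first associativity step. There you drop the contribution of the $z$-component of $x^2$. In fact
\[
(x^2)y=(2\delta^2+\alpha\epsilon)\,y,\qquad x(xy)=\delta^2y,
\]
so the relation is $\delta^2+\alpha\epsilon=0$, not $\delta^2=2\delta^2$. This forces $\delta=0$ only when $\epsilon=0$. Your branches leading to $P_{3,17}$, $P_{3,18}$ and $P_{3,19}$ therefore survive. In the branch $\epsilon\neq0$, however, $\delta$ is free and $\alpha=-\delta^2/\epsilon$ need not vanish.

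A concrete witness is $P_{3,20}$ in the basis $x'=x+z$, $y$, $z$. Here $[x',y]=y$ and $z$ is still central, but $x'y=y$ and $x'^2=2x'-z$; in characteristic $2$ this reads $x'^2=z$. So your claim that $\epsilon=1$ forces $\alpha=0$ is false. Your plan to check $\delta=0$ by hand in characteristic $2$ cannot succeed either.

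What is missing is a change of basis, which is exactly the paper's step. When $\epsilon\neq0$ one gets $\theta=\epsilon$, $\kappa=0$ and $\alpha=-\delta^2/\epsilon$. Put $e=\epsilon^{-1}z$ and $u=x-\delta e$. This preserves $[u,y]=y$ because $z$ is central, and gives $e^2=e$, $eu=u$, $ey=y$, $u^2=uy=0$.

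The relation you list from $(x^2)z=x(xz)$ is also misstated. It actually gives $\kappa\epsilon=0$ and $\alpha(\theta-\epsilon)+2\delta\kappa-\kappa^2=0$, which becomes $\alpha\theta=\alpha\epsilon+\kappa^2$ once $\delta=0$. It is not $\alpha\theta=\epsilon^2+\kappa\alpha$. Only the correct form yields $\kappa=0$ when $\theta=0$ and $\alpha=\kappa^2$ when $\theta=1$, $\epsilon=0$, both of which you use later; your stated version gives neither.
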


\begin{proof}
Choose a basis $\{x,y,z\}$ with
\[
[x,y]=y,\qquad
[x,z]=[y,z]=0.
\]
The Leibniz identity forces the associative multiplication to have
the form
\[
\begin{aligned}
x^2&=2\alpha x+\beta z,\\
xy&=\alpha y,\\
xz&=\gamma x+\delta z,\\
y^2&=0,\\
yz&=\gamma y,\\
z^2&=\varepsilon z.
\end{aligned}
\]
Associativity yields
\[
\beta\gamma+\alpha^2=0,
\qquad
\gamma\delta=0,
\qquad
\gamma(\gamma-\varepsilon)=0,
\]
and
\[
\beta(\varepsilon-\gamma)
+2\alpha\delta-\delta^2=0.
\]

Suppose first that
\[
\gamma\neq0.
\]
Then
\[
\varepsilon=\gamma,\qquad
\delta=0,\qquad
\beta=-\frac{\alpha^2}{\gamma}.
\]
Put
\[
e=\gamma^{-1}z,
\qquad
u=x-\alpha e.
\]
Then
\[
e^2=e,\qquad
eu=u,\qquad
ey=y,
\]
while
\[
u^2=uy=y^2=0.
\]
Thus $e$ is the identity element of the associative algebra. Renaming
$u$ as $x$ and $e$ as $z$, we obtain $P_{3,20}$.

Suppose now that
\[
\gamma=0.
\]
Then
\[
\alpha=0
\]
and
\[
x^2=\beta z,\qquad
xz=\delta z,\qquad
z^2=\varepsilon z,
\]
where
\[
\beta\varepsilon=\delta^2.
\]

If
\[
\varepsilon\neq0,
\]
put
\[
e=\varepsilon^{-1}z
\]
and replace $x$ by
\[
x-\delta e.
\]
Then
\[
e^2=e
\]
and all other associative products vanish. Renaming $e$ as $z$, we
obtain $P_{3,19}$.

If
\[
\varepsilon=0,
\]
then
\[
\delta=0.
\]
When $\beta=0$, the associative multiplication is zero, giving
$P_{3,17}$.

When $\beta\neq0$, replacing $z$ by $\beta z$ gives
\[
x^2=z,
\]
which is $P_{3,18}$.

The four algebras are pairwise non-isomorphic. Indeed, their
associative structures have respectively
\[
\dim_F P^2=0,1,1,3,
\]
while in the two cases with $\dim_F P^2=1$ one has
\[
P^3=\langle0\rangle
\]
for $P_{3,18}$ and
\[
P^3=P^2
\]
for $P_{3,19}$.
\end{proof}

For $P_{3,17},\ldots,P_{3,20}$,
\[
[P,P]=Fy,
\qquad
\zeta(P(+,[\, ,\,]))=Fz.
\]
The associated Lie algebra is solvable but not nilpotent, since
\[
[[P,P],P]=Fy.
\]

The associative annihilators are
\[
\begin{array}{c|c}
P & \operatorname{Ann}(P(+,\cdot))\\
\hline
P_{3,17} & P\\
P_{3,18} & Fy\oplus Fz\\
P_{3,19} & Fx\oplus Fy\\
P_{3,20} & \langle0\rangle.
\end{array}
\]

\subsection{Two-dimensional derived Lie algebra}

We now consider
\[
\dim_F[P,P]=2.
\]
This case admits a particularly compact structural description.

\begin{theorem}
\label{thm:3d-derived-two}
Let $P$ be a three-dimensional Poisson algebra such that
\[
\dim_F[P,P]=2.
\]
Put
\[
V=[P,P].
\]
Then
\[
[V,V]=\langle0\rangle
\]
and there exists $x\notin V$ such that
\[
P=Fx\oplus V,
\qquad
[x,v]=A(v)
\quad (v\in V),
\]
where
\[
A\in GL(V).
\]

If
\[
\operatorname{char}(F)\neq2,
\]
the associative multiplication of $P$ is necessarily zero.

If
\[
\operatorname{char}(F)=2,
\]
then either the associative multiplication is zero or $P$ has a basis
$\{x,u,v\}$ such that
\[
[x,u]=u,\qquad
[x,v]=v+qu,
\]
and
\[
xv=u
\]
for some $q\in F$, all remaining associative products being zero.
\end{theorem}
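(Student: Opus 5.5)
The plan is to treat the Lie structure and the associative structure separately: first pin down the Lie algebra with $\dim_F[P,P]=2$ using only Lie-theoretic facts, then show that the two fixed Lie elements force strong restrictions on the commutative product through the Leibniz identity.

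\emph{Lie part.} I first show that $[V,V]=\langle0\rangle$. Suppose not. Then $V$ is the two-dimensional non-abelian Lie algebra, with a basis $\{a,b\}$ such that $[a,b]=b$. Choose $x\notin V$. Since $V$ is an ideal, $D=\operatorname{ad}x|_V$ is a derivation of $V$. Any derivation of $V$ maps $b$ into $[V,V]=Fb$. Write $D(a)=\alpha a+\beta b$; applying $D$ to $[a,b]=b$ forces $\alpha=0$. Comparing with $\operatorname{ad}(-\beta b)$ and $\operatorname{ad}(\mu a)$ shows that $D$ is inner, and this computation is valid in every characteristic. So there is $w\in V$ such that $x'=x-w$ centralizes $V$. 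Then $[P,P]=[V,V]=Fb$ is one-dimensional, a contradiction. Hence $V$ is abelian, and $A=\operatorname{ad}x|_V\in\operatorname{End}(V)$ is well defined. Moreover $V=[P,P]=[x,V]=A(V)$, so $A\in GL(V)$.

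A consequence I will use repeatedly concerns centralizers. If $c=\lambda x+v$ with $[c,V]=0$, then $\lambda A(V)=0$, so $\lambda=0$. Thus the centralizer of $V$ is $V$ itself. Since also $[v,x]=-A(v)$, the Lie center is $\langle0\rangle$. By Lemma~\ref{lem:idempotents-central}, $P(+,\cdot)$ therefore has no non-zero idempotents.

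\emph{Associative part.} The Leibniz identity says that each $\operatorname{ad}c$ is a derivation of $P(+,\cdot)$. For $v,w,w'\in V$ we get $[vw,w']=v[w,w']+w[v,w']=0$. So $vw$ centralizes $V$, which gives $V^2\subseteq V$. Similarly, $[xv,w]=vA(w)$ and $[x^2,w]=2x\,A(w)$. With these observations I write $xv=\lambda(v)x+\ldots$ and $x^2=\kappa x+\ldots$ and compare the $x$-components of both sides of the Leibniz identities.

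\begin{itemize}
\item Bracketing with elements of $V$ produces no $x$-component. This should kill the $x$-coefficients, giving $xV\subseteq V$ and $x^2\in V$, so that $P^2\subseteq V$.
\item Next, $[x^2,w]=2xA(w)$ together with $[x^2,w]\in[V,V]=0$ gives $2xV=0$. In characteristic $\neq2$ this yields $xV=0$.
\item Then $vA(w)=[xv,w]=0$, so $V^2=0$ because $A$ is surjective.
\item Finally, $[x^2,x]$ is determined by the derivation $\operatorname{ad}x$, which gives $A(x^2)=2x[x,x]=0$. Invertibility of $A$ then yields $x^2=0$.
\end{itemize}

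This proves that the associative multiplication is zero when $\operatorname{char}(F)\neq2$.

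\emph{Characteristic $2$.} Here the relation $2xV=0$ is vacuous, so the last steps must be redone. I still expect $P^2\subseteq V$, $x^2=0$ (from $A(x^2)=0$) and $V^2=0$, the last from $vA(w)=[xv,w]$ with $xv\in V$, giving $vA(w)=0$. It then remains to analyse the linear map $L=L_x|_V:V\to V$. Applying $\operatorname{ad}x$ to $xv$ yields $A L=LA$, since $[x,x]=0$. Associativity gives $L^2v=x^2v=0$, so $L$ is nilpotent. If $L\neq0$, then $\ker L=\operatorname{Im}L=Fu$, and $A$ preserves $Fu$. Choose $v$ with $xv=u$ and write $A(u)=\mu u$, $A(v)=\mu' v+qu$. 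Commutation $AL=LA$ forces $\mu=\mu'$. Rescaling $x$ by $\mu^{-1}$ and $u$ accordingly gives $[x,u]=u$, $[x,v]=v+qu$, $xv=u$. Adding multiples of $V$ to $x$ is harmless, since $V$ annihilates everything relevant.

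The main obstacle is the bookkeeping in the first associative step, namely showing $xV\subseteq V$ and $x^2\in V$ without assuming $P^2\subseteq V$ in advance. I will first try comparing $x$-components in $[xv,w]=vA(w)$ and in $[x^2,w]$, using $V^2\subseteq V$ and the absence of idempotents to exclude a non-zero $x$-coefficient.
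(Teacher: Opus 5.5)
\textbf{The gap is the step you flag as the main obstacle: $P^2\subseteq V$ (your first bullet) is not proved, and comparing $x$-components cannot prove it.} Your Lie part is correct; the inner-derivation argument is a repackaging of the paper's Jacobi computation. Once $P^2\subseteq V$ is known, your remaining steps are also correct and essentially the paper's. That includes the characteristic-$2$ normalization via $L=L_x|_V$ with $L^2=0$ and $AL=LA$.

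Write $xv=\lambda(v)x+B(v)$ and $x^2=\kappa x+w_0$ with $B(v),w_0\in V$.
In $[xv,w]=vA(w)$ both sides already lie in $V$: the left side equals $\lambda(v)A(w)$, and the right side lies in $V^2\subseteq V$. So there is no $x$-component to compare.
In $[x^2,w]=2xA(w)$ the $x$-components give only $2\lambda(A(w))=0$. This is empty in characteristic $2$, and it never involves $\kappa$.
In fact no use of the Leibniz identities alone can give $\kappa=0$ when $\operatorname{char}(F)\neq2$. The commutative product $x^2=\kappa x$, $xv=\tfrac{\kappa}{2}v$, $V^2=0$ satisfies every Leibniz identity with the given bracket. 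Only associativity rules it out, since $(xx)v=\tfrac{\kappa^2}{2}v\neq\tfrac{\kappa^2}{4}v=x(xv)$ for $\kappa\neq0$.
So your second through fourth bullets, and the whole characteristic-$2$ paragraph (``I still expect''), rest on an unproved claim.

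The missing information sits in the $V$-components. Since $[xv,w]=\lambda(v)A(w)$ and $A$ is surjective, the identity $[xv,w]=vA(w)$ gives $vt=\lambda(v)t$ for all $v,t\in V$.
\begin{itemize}
\item The paper concludes by commutativity: $\lambda(v)t=\lambda(t)v$ for linearly independent $v,t$ forces $\lambda=0$. Hence $xV\subseteq V$ and $V^2=0$.
\item Your idempotent idea also works here. If $\lambda(v)\neq0$, then $v^2=\lambda(v)v$, so $\lambda(v)^{-1}v$ is a non-zero idempotent. This is impossible because the Lie center is zero.
\end{itemize}
For $x^2$, the identity $[x^2,x]=2x[x,x]=0$ gives $A(w_0)=0$. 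Hence $w_0=0$ and $x^2=\kappa x$.
\begin{itemize}
\item If $\kappa\neq0$, then $\kappa^{-1}x$ would be a non-zero idempotent, again excluded. This is where associativity enters, through Lemma~\ref{lem:idempotents-central}.
\item The paper instead derives $2B=aI$, $AB=BA$, $B^2=aB$ and reads off $a=0$. In characteristic $2$ this follows already from $2B=aI$; otherwise it needs $B^2=aB$.
\end{itemize}
With either version of this step inserted, your argument goes through.
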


\begin{proof}
Put
\[
V=[P,P].
\]
We first prove that $V$ is abelian.

Suppose otherwise. Since $\dim_F V=2$, choose a basis $\{u,v\}$ of
$V$ such that
\[
[u,v]=v.
\]
Let
\[
x\notin V
\]
and write
\[
[x,u]=au+bv,
\qquad
[x,v]=cu+dv.
\]
The Jacobi identity for $x,u,v$ gives
\[
c=0,\qquad
a=0.
\]
It follows that every basic Lie bracket belongs to $Fv$, contrary to
\[
\dim_F[P,P]=2.
\]
Therefore
\[
[V,V]=\langle0\rangle.
\]

Since
\[
[P,P]=[x,V],
\]
the operator
\[
A=\operatorname{ad}_x|_V
\]
is invertible.

Let $r,s,t\in V$. Since $V$ is abelian, the Leibniz identity gives
\[
[rs,t]
=
r[s,t]+s[r,t]
=
0.
\]
Thus every product $rs$ centralizes $V$. Since $A$ is invertible, the
centralizer of $V$ in the associated Lie algebra is exactly $V$.
Hence
\[
V^2\subseteq V.
\]

Write
\[
xr=\lambda(r)x+B(r),
\qquad r\in V,
\]
where
\[
\lambda:V\longrightarrow F
\]
and
\[
B:V\longrightarrow V
\]
are linear.

For $r,s\in V$, the Leibniz identity gives
\[
[xr,s]=r[x,s].
\]
Therefore
\[
\lambda(r)A(s)=rA(s).
\]
Since
\[
A(V)=V,
\]
we obtain
\[
rt=\lambda(r)t
\qquad
(r,t\in V).
\]
By commutativity,
\[
\lambda(r)t=\lambda(t)r.
\]
Since $\dim_F V=2$, this is possible for all $r,t\in V$ only when
\[
\lambda=0.
\]
Consequently
\[
V^2=\langle0\rangle,
\qquad
xV\subseteq V.
\]

Write
\[
xv=B(v)
\qquad(v\in V)
\]
and
\[
x^2=ax+w,
\qquad
w\in V.
\]
The Leibniz identity gives
\[
[x^2,x]=0.
\]
Hence
\[
A(w)=0.
\]
Since $A$ is invertible,
\[
w=0,
\]
and therefore
\[
x^2=ax.
\]

Applying the Leibniz identity and associativity to these products gives
\[
2B=aI,
\qquad
AB=BA,
\qquad
B^2=aB.
\]

Suppose first that
\[
\operatorname{char}(F)\neq2.
\]
Then
\[
B=\frac{a}{2}I.
\]
Substituting this into
\[
B^2=aB
\]
gives
\[
\frac{a^2}{4}I
=
\frac{a^2}{2}I,
\]
and therefore
\[
a=0.
\]
Thus
\[
B=0,
\]
and the associative multiplication is zero.

Let now
\[
\operatorname{char}(F)=2.
\]
Then
\[
2B=aI
\]
gives
\[
a=0,
\]
and hence
\[
B^2=0,\qquad
AB=BA.
\]
If
\[
B=0,
\]
the associative multiplication is again zero.

Suppose
\[
B\neq0.
\]
Since $\dim_F V=2$ and $B^2=0$, choose a basis $\{u,v\}$ of $V$ such
that
\[
B(u)=0,\qquad
B(v)=u.
\]
The relation
\[
AB=BA
\]
then implies that, in this basis,
\[
A=
\begin{pmatrix}
p&r\\
0&p
\end{pmatrix},
\qquad
p\neq0.
\]

Replacing $x$ by $p^{-1}x$ and simultaneously replacing $u$ by
$p^{-1}u$, we preserve the equality
\[
xv=u
\]
and normalize the diagonal coefficient of $A$ to $1$. Thus
\[
[x,u]=u,\qquad
[x,v]=v+qu
\]
for some $q\in F$, and
\[
xv=u.
\]

Conversely, for every $q\in F$ these operations define a Poisson
algebra: the associative multiplication is commutative and
associative, and the Leibniz identity is verified directly on the
basis elements.
\end{proof}

When the associative multiplication is zero, we denote the resulting
algebra by
\[
P_{3,21}(A),
\qquad
A\in GL_2(F),
\]
where
\[
[x,v]=A(v),
\qquad
v\in V.
\]

Two such algebras are isomorphic precisely when the corresponding
operators differ by similarity and multiplication by a non-zero
scalar. Equivalently,
\[
P_{3,21}(A)\cong P_{3,21}(A')
\]
if and only if
\[
A'=\lambda SAS^{-1}
\]
for some
\[
S\in GL_2(F),
\qquad
\lambda\in F^\times.
\]

In characteristic $2$, we denote the additional family by
\[
P_{3,22}(q),
\qquad
q\in F.
\]
Thus
\[
[x,u]=u,\qquad
[x,v]=v+qu,\qquad
xv=u.
\]

Moreover,
\[
P_{3,22}(q)\cong P_{3,22}(q')
\quad\Longleftrightarrow\quad
q=q'.
\]

Indeed, suppose
\[
\varphi:
P_{3,22}(q)\longrightarrow P_{3,22}(q')
\]
is an isomorphism. The subspaces
\[
P^2=Fu
\]
and
\[
[P,P]=Fu\oplus Fv
\]
are invariant under every Poisson algebra isomorphism. Hence
\[
\varphi(u)=a u',
\]
\[
\varphi(v)=b u'+c v',
\]
and
\[
\varphi(x)=d x'+r u'+s v',
\]
where
\[
a,c,d\neq0.
\]

The equality
\[
\varphi(xv)=\varphi(x)\varphi(v)
\]
gives
\[
a=dc.
\]
The equality
\[
\varphi([x,u])
=
[\varphi(x),\varphi(u)]
\]
gives
\[
d=1.
\]
Finally,
\[
\varphi([x,v])
=
[\varphi(x),\varphi(v)]
\]
gives
\[
q=q'.
\]

For all algebras considered in this subsection,
\[
[P,P]=V,
\qquad
\zeta(P(+,[\, ,\,]))=\langle0\rangle,
\]
and the associated Lie algebra is solvable but not nilpotent.

For $P_{3,22}(q)$,
\[
P^2=Fu,\qquad
P^3=\langle0\rangle,
\qquad
\operatorname{Ann}(P(+,\cdot))=Fu.
\]

\subsection{The perfect Lie case}

The remaining possibility for the Lie structure is
\[
[P,P]=P.
\]
In this case the Poisson compatibility leaves no freedom for the
associative multiplication.

\begin{theorem}
\label{thm:3d-perfect}
Let $P$ be a three-dimensional Poisson algebra such that
\[
[P,P]=P.
\]
Then the associated Lie algebra $P(+,[\, ,\,])$ is simple and
\[
P^2=\langle0\rangle.
\]
Conversely, every three-dimensional simple Lie algebra over $F$,
equipped with the zero associative multiplication, is a Poisson
algebra.
\end{theorem}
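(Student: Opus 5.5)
We split the argument into the Lie part and the associative part. Throughout, $L=P(+,[\, ,\,])$.

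\textbf{Simplicity of $L$.} Let $I$ be a proper non-zero ideal of $L$.

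If $\dim_F I=1$, then $L/I$ is two-dimensional, so its derived algebra has dimension at most $1$. Hence $[L,L]+I$ has dimension at most $2$, contradicting $[L,L]=L$.

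If $\dim_F I=2$, then $L/I$ is one-dimensional and abelian, so $[L,L]\subseteq I$, again a contradiction.

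Thus $L$ has no proper non-zero ideals. It is non-abelian, since $[L,L]=L\neq0$, so $L$ is simple. The argument only uses dimension counting, so it is valid in every characteristic.

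\textbf{Vanishing of $P^2$.} By Lemma~\ref{lem:basic-poisson-ideals}, $P^2$ is a Poisson ideal, and in particular a Lie ideal of $L$. Since $L$ is simple, either $P^2=\langle0\rangle$ or $P^2=P$.

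Suppose $P^2=P$. By Lemma~\ref{lem:square-equals-algebra}, $P(+,\cdot)$ has an identity element $e$. By Lemma~\ref{lem:idempotents-central}, $e$ lies in $\zeta(L)$. The center $\zeta(L)$ is a Lie ideal, and it is proper because $L$ is non-abelian. Simplicity then forces $\zeta(L)=\langle0\rangle$, so $e=0$. But then $P=P^2=eP=\langle0\rangle$, a contradiction. Hence $P^2=\langle0\rangle$.

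An alternative route would use $\operatorname{Ann}(P(+,\cdot))$: it is a Lie ideal, so it equals $\langle0\rangle$ or $P$, and it equals $P$ exactly when $P^2=0$. The first route is shorter, so we use it.

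\textbf{The converse.} With the zero associative product, both sides of the Leibniz identity $[xy,z]=x[y,z]+y[x,z]$ vanish. The zero product is trivially commutative and associative. So every three-dimensional simple Lie algebra over $F$ becomes a Poisson algebra in this way.

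\textbf{Main obstacle.} The only delicate point is the first step: checking carefully that a three-dimensional perfect Lie algebra has no ideal of dimension $1$ or $2$. This requires the observation that a Lie algebra of dimension at most $2$ has derived algebra of dimension at most $1$, which holds because the Lie bracket is alternating. It needs no assumption on $\operatorname{char}(F)$.
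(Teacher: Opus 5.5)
Your proof is correct and follows essentially the same route as the paper: you get simplicity because no perfect Lie algebra of dimension one or two exists, then $P^2\in\{\langle0\rangle,P\}$ since $P^2$ is a Lie ideal, and you exclude $P^2=P$ because the associative identity would be a non-zero central element of a simple Lie algebra. Your dimension count for ideals of dimension $1$ and $2$ spells out the paper's remark that every quotient of $L$ is perfect, and your check that $e\neq0$ makes explicit a step the paper leaves implicit.
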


\begin{proof}
Put
\[
L=P(+,[\, ,\,]).
\]
Since
\[
[L,L]=L,
\]
every quotient of $L$ is perfect. A non-zero Lie algebra of dimension
one or two cannot be perfect. Hence $L$ has no non-zero proper ideals,
and therefore $L$ is simple.

By Lemma~\ref{lem:basic-poisson-ideals}, $P^2$ is a Lie ideal of $L$.
Therefore
\[
P^2=\langle0\rangle
\qquad\text{or}\qquad
P^2=P.
\]

Suppose
\[
P^2=P.
\]
By Lemma~\ref{lem:square-equals-algebra}, the associated commutative
associative algebra has an identity element $e$. By
Lemma~\ref{lem:idempotents-central},
\[
e\in\zeta(L).
\]
But a simple non-abelian Lie algebra has zero center, a contradiction.
Hence
\[
P^2=\langle0\rangle.
\]

The converse is immediate, since the Leibniz identity is automatically
satisfied when the associative multiplication is zero.
\end{proof}

We denote these algebras by
\[
P_{3,23}(\mathfrak{s}),
\]
where $\mathfrak{s}$ runs through the isomorphism classes of
three-dimensional simple Lie algebras over $F$. Thus
\[
P_{3,23}(\mathfrak{s})(+,[\, ,\,])
\cong\mathfrak{s},
\qquad
P_{3,23}(\mathfrak{s})^2=\langle0\rangle.
\]

\subsection{The complete three-dimensional classification}

We can now collect the preceding results. The notation
$P_{3,1},\ldots,P_{3,23}$ refers to structural entries in the
classification rather than to twenty-three individual isomorphism
classes: several of these entries are parameterized families whose
number of isomorphism classes depends on the ground field.

\begin{theorem}
\label{thm:3d-classification}
Every three-dimensional Poisson algebra over an arbitrary field $F$
is isomorphic to one of the algebras
\[
P_{3,1},\ldots,P_{3,15},
\qquad
P_{3,16}(M),
\qquad
P_{3,17},\ldots,P_{3,20},
\]
\[
P_{3,21}(A),
\qquad
P_{3,22}(q)
\quad\text{if }\operatorname{char}(F)=2,
\qquad
P_{3,23}(\mathfrak{s}),
\]
described above.

The parameters are subject precisely to the following
identifications:
\[
P_{3,4}(M)\cong P_{3,4}(M')
\quad\Longleftrightarrow\quad
M'=\lambda C^TMC
\]
for some
\[
C\in GL_2(F),
\qquad
\lambda\in F^\times;
\]
\[
P_{3,16}(M)\cong P_{3,16}(M')
\quad\Longleftrightarrow\quad
M'=\frac{1}{\det C}C^TMC
\]
for some
\[
C\in GL_2(F);
\]
\[
P_{3,21}(A)\cong P_{3,21}(A')
\]
if and only if
\[
A'=\lambda SAS^{-1}
\]
for some
\[
S\in GL_2(F),
\qquad
\lambda\in F^\times;
\]
and
\[
P_{3,22}(q)\cong P_{3,22}(q')
\quad\Longleftrightarrow\quad
q=q'.
\]

The field extensions occurring in $P_{3,9}(E)$,
$P_{3,12}(E)$ and $P_{3,13}(K)$ are taken up to
$F$-algebra isomorphism. In the family
$P_{3,23}(\mathfrak{s})$, the parameter $\mathfrak{s}$ runs through
the $F$-isomorphism classes of three-dimensional simple Lie algebras
over $F$. Thus
\[
P_{3,23}(\mathfrak{s})
\cong
P_{3,23}(\mathfrak{s}')
\quad\Longleftrightarrow\quad
\mathfrak{s}\cong_F\mathfrak{s}'.
\]
\end{theorem}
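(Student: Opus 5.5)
The theorem is obtained by assembling the preceding results according to the dimension of the derived Lie algebra. Since $\dim_F P=3$ and $[P,P]\subseteq P$, exactly one of $\dim_F[P,P]=0,1,2,3$ holds, and this number is invariant under Poisson isomorphism. Existence of a representative then follows case by case:
\begin{itemize}
\item $\dim_F[P,P]=0$: Theorem~\ref{thm:3d-trivial-lie} gives $P_{3,1},\ldots,P_{3,15}$.
\item $\dim_F[P,P]=1$: whether $[P,P]\subseteq\zeta(P(+,[\, ,\,]))$ is an isomorphism invariant. The central case is Theorem~\ref{thm:3d-central-derived} and gives $P_{3,16}(M)$. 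The non-central case is Theorem~\ref{thm:3d-noncentral-derived} and gives $P_{3,17},\ldots,P_{3,20}$.
\item $\dim_F[P,P]=2$: Theorem~\ref{thm:3d-derived-two} gives $P_{3,21}(A)$, together with $P_{3,22}(q)$ when $\operatorname{char}(F)=2$.
\item $[P,P]=P$: Theorem~\ref{thm:3d-perfect} gives $P_{3,23}(\mathfrak{s})$.
\end{itemize}

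We must also check that the normalizations used in each case are legitimate. In the non-central case with $\dim_F[P,P]=1$, write $[P,P]=Fy$. Take $x$ with $[x,y]\neq0$ and rescale so that $[x,y]=y$. Every bracket lies in $Fy$, so the map $w\mapsto[x,w]$ followed by the projection onto $y$ shows that the kernel of $\operatorname{ad}$ restricted to $y$-coefficients contains a complement of $Fy$ in $\ker(\operatorname{ad}_x)$. Subtracting suitable multiples of $x$ and $y$ from a third basis vector produces a central element $z$. The central case can be normalized to $[x,y]=z$ in the same way.

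Non-isomorphism across different entries uses the invariants $\dim_F[P,P]$ and the centrality of $[P,P]$. Within the Lie-trivial block, Theorem~\ref{thm:3d-trivial-lie} asserts exactness, but its proof records only some separating invariants, so I would verify it with the following invariants:
\begin{itemize}
\item $\dim_F P^2$;
\item $\dim_F\operatorname{Ann}$;
\item whether $P^3=0$;
\item the existence of idempotents;
\item the structure of $eA$ and $A/\operatorname{Nil}(A)$;
\item the number of idempotents, which separates $F^3$ from $F\times F[\varepsilon]$ and $F\times E$.
\end{itemize}
The same invariants separate $P_{3,17},\ldots,P_{3,20}$, as in Theorem~\ref{thm:3d-noncentral-derived}, and they separate $P_{3,16}(M)$ for $M=0$ from $M\neq0$. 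Inside the case $\dim_F[P,P]=2$ in characteristic $2$, $P_{3,21}(A)$ has $P^2=0$ while $P_{3,22}(q)$ has $P^2=Fu\neq0$, so the two families never meet.

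The parameter identifications have been proved for $P_{3,4}$, $P_{3,16}$ and $P_{3,22}$. For $P_{3,23}$ the identification is tautological, since the associative multiplication is zero and Poisson isomorphisms are exactly Lie isomorphisms. The field parameters were handled at the end of the proof of Theorem~\ref{thm:3d-trivial-lie}.

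The main remaining obstacle is the identification for $P_{3,21}(A)$, which the text only asserts. Here I would argue directly. Any isomorphism $\varphi$ preserves $V=[P,P]$, so $\varphi(x)=\lambda x'+w$ with $\lambda\neq0$, $w\in V$, and $\varphi|_V=S$. Since $V$ is abelian, $[\varphi(x),\varphi(v)]=\lambda A'(Sv)$, while $\varphi([x,v])=SA(v)$. Hence $SAS^{-1}=\lambda A'$, which is the stated relation after renaming $\lambda^{-1}$ as $\lambda$. Conversely, given $A'=\lambda SAS^{-1}$, the map $x\mapsto\lambda^{-1}x'$, $v\mapsto Sv$ is a Lie isomorphism. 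It is automatically a Poisson isomorphism because both associative multiplications vanish.
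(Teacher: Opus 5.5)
Your proposal is correct and follows the paper's proof: you dispatch by $\dim_F[P,P]$ (and by centrality when this dimension is $1$) to the four sub-theorems and separate the blocks by these invariants, and your direct derivation of $A'=\lambda SAS^{-1}$ for $P_{3,21}(A)$, together with the $P^2$ comparison of $P_{3,21}$ with $P_{3,22}$, supplies details the paper only asserts. Two small repairs: first, your invariant list does not separate $P_{3,14}$ from $P_{3,15}$ (both are unital local with $A/\operatorname{Nil}(A)\cong F$ and only trivial idempotents), but $\operatorname{Nil}(A)^2=0$ versus $\operatorname{Nil}(A)^2\neq0$ does; second, your sentence about a complement of $Fy$ in $\ker(\operatorname{ad}_x)$ is incoherent since $y\notin\ker(\operatorname{ad}_x)$, although your conclusion is right: if $[x,z_0]=ay$ and $[y,z_0]=by$, then $z=z_0-ay+bx$ is central.
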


\begin{proof}
The dimension of the derived Lie algebra is one of
\[
0,\quad1,\quad2,\quad3.
\]

If
\[
[P,P]=\langle0\rangle,
\]
Theorem~\ref{thm:3d-trivial-lie} applies.

If
\[
\dim_F[P,P]=1,
\]
the derived algebra is either contained in the Lie center or it is not.
These two cases are covered by
Theorems~\ref{thm:3d-central-derived} and
\ref{thm:3d-noncentral-derived}.

If
\[
\dim_F[P,P]=2,
\]
Theorem~\ref{thm:3d-derived-two} applies.

Finally, if
\[
\dim_F[P,P]=3,
\]
then
\[
[P,P]=P,
\]
and Theorem~\ref{thm:3d-perfect} applies.

The different main groups cannot be isomorphic because
\[
\dim_F[P,P]
\]
is an isomorphism invariant. Within the case
$\dim_F[P,P]=1$, centrality of $[P,P]$ separates the two families.

When
\[
[P,P]=\langle0\rangle,
\]
the invariant
\[
\dim_F P^2
\]
first separates the four groups occurring in
Theorem~\ref{thm:3d-trivial-lie}. Within these groups, the associative
powers, the dimension of the associative annihilator, the presence of
idempotents and the direct-sum decomposition distinguish the
individual structural types. The remaining identifications are
exactly the parameter identifications stated above.
\end{proof}

The classification shows a qualitative difference between dimensions
two and three. In dimension two, one of the two Poisson
multiplications must vanish. In dimension three, there already exist
several families for which both the associative and Lie
multiplications are non-zero.

The dependence on the ground field is also substantially stronger in
dimension three. Quadratic extensions occur in $P_{3,9}(E)$ and
$P_{3,12}(E)$, cubic extensions occur in $P_{3,13}(K)$, and the
isomorphism classes inside the matrix families $P_{3,4}(M)$ and
$P_{3,16}(M)$ depend on the arithmetic of $F$. The Lie structures
arising when $\dim_F[P,P]=2$ and when $[P,P]=P$ may also depend on
the ground field. In addition, characteristic $2$ gives the extra
family $P_{3,22}(q)$.

\subsection*{Relation with the nilpotent classification}

We next compare our results with the classification of nilpotent
Poisson algebras obtained in
\cite{AbdelwahabBarreiroCalderonFernandez2023}.
The notion of nilpotency used there involves both Poisson
multiplications. For the purpose of this comparison, put
\[
\mathcal N_0(P)=P,\qquad
\mathcal N_{n+1}(P)
=
\mathcal N_n(P)P+[\mathcal N_n(P),P].
\]
Thus $P$ is nilpotent in this sense if
$\mathcal N_m(P)=\langle0\rangle$ for some $m$.
The notation $\mathcal N_n(P)$ is used here in order to distinguish
this series from the associative powers $P^n$ used throughout the
paper.

\begin{proposition}
\label{prop:nilpotent-specialization}
Assume that $\operatorname{char}(F)\neq2$.

\begin{itemize}

\item[(i)]
Among the two-dimensional Poisson algebras of
Theorem~\ref{thm:dimension-two}, the nilpotent ones are precisely
\[
P_{2,1}
\qquad\text{and}\qquad
P_{2,2}.
\]

\item[(ii)]
Among the three-dimensional Poisson algebras of
Theorem~\ref{thm:3d-classification}, the nilpotent ones are precisely
\[
P_{3,1},\qquad
P_{3,3},\qquad
P_{3,4}(M),\qquad
P_{3,5},\qquad
P_{3,16}(M).
\]

\end{itemize}

Thus, in characteristic different from $2$, the nilpotent members of
our two- and three-dimensional classifications are exactly the
algebras listed above, in agreement with the previously known
low-dimensional nilpotent theory developed in
\cite{AbdelwahabBarreiroCalderonFernandez2023}.
\end{proposition}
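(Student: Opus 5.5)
We will check each algebra in the classification against the definition of nilpotency via the series $\mathcal N_n(P)$. The argument rests on two elementary criteria.

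The first criterion is necessary. If $P$ is nilpotent in this sense, then each $\mathcal N_n(P)$ is a Poisson ideal, and $\mathcal N_1(P)=P^2+[P,P]$. A non-zero idempotent $e$ obstructs nilpotency: $e=e^n\in\mathcal N_{n-1}(P)$ for every $n$, since $\mathcal N_{n}(P)\supseteq \mathcal N_{n-1}(P)P$ and $e\in\mathcal N_{n-1}(P)$ gives $e=ee\in\mathcal N_n(P)$. Likewise, if $[P,P]$ contains a non-zero $y$ with $[x,y]=y$, or more generally if $\operatorname{ad}_x$ restricted to $[P,P]$ is invertible, then $y\in\mathcal N_n(P)$ for all $n$. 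So algebras with idempotents, or with a non-nilpotent associated Lie algebra, are not nilpotent.

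The second criterion is sufficient. For the remaining algebras we will compute the series directly and see that it terminates after at most three steps.

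In dimension two we run through the cases as follows.
\begin{itemize}
\item $P_{2,3}$, $P_{2,4}$ and $P_{2,5}$ contain non-zero idempotents, and so does $P_{2,6}(E)$, whose identity is one.
\item $P_{2,7}$ has $[a,b]=b$, so $b$ survives at every stage.
\item $P_{2,1}$ has $\mathcal N_1=0$.
\item $P_{2,2}$ has $\mathcal N_1=Fb$ and $\mathcal N_2=0$.
\end{itemize}

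In dimension three we proceed through the entries of Theorem~\ref{thm:3d-classification}.
\begin{itemize}
\item $P_{3,2}$ and $P_{3,6},\dots,P_{3,15}$ each contain a non-zero idempotent. For $P_{3,10}$–$P_{3,15}$ it is the identity. For $P_{3,7}$, $P_{3,8}$ and $P_{3,9}(E)$ it is $e_1$, $e$ or the identity of $E$. For $P_{3,2}$ and $P_{3,6}$ it is $e$.
\item $P_{3,19}$ and $P_{3,20}$ contain the idempotent $z$.
\item $P_{3,17}$ and $P_{3,18}$ have $[x,y]=y$, so they fail by the Lie criterion.
\item $P_{3,21}(A)$ fails because $A$ is invertible.
\item $P_{3,23}(\mathfrak s)$ satisfies $\mathcal N_n=P$ for all $n$.
\item $P_{3,22}(q)$ only occurs in characteristic $2$, so it is excluded by hypothesis. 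It would in any case fail because $[x,u]=u$.
\end{itemize}

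For the positive cases we compute the series.
\begin{itemize}
\item $P_{3,1}$ has $\mathcal N_1=0$.
\item $P_{3,3}$ and $P_{3,4}(M)$ have $\mathcal N_1=Fz$ with $zP=0$, so $\mathcal N_2=0$.
\item $P_{3,5}$ has $\mathcal N_1=Fy+Fz$, $\mathcal N_2=Fz$ and $\mathcal N_3=0$.
\item $P_{3,16}(M)$ has $\mathcal N_1=Fz$ because $P^2\subseteq Fz$ and $[P,P]=Fz$. Since $z$ is Lie-central and $zP=0$, we get $\mathcal N_2=0$.
\end{itemize}

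The only real obstacle is bookkeeping: confirming that every non-listed structural entry, including the parametrized families and the algebras without an explicit idempotent in their presentation, is covered by one of the two obstructions. We will handle this by organizing the check exactly along the case division of Theorem~\ref{thm:3d-classification}, using the invariants $\dim_F[P,P]$ and $\dim_F P^2$ recorded after each theorem.

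The final sentence of the statement, agreement with \cite{AbdelwahabBarreiroCalderonFernandez2023}, we treat as a remark. The listed nilpotent algebras correspond to their tables of dimension at most three, with the parameter matrices $M$ matched via Corollary~\ref{cor:3d-central-derived-char-not-two}, which is where $\operatorname{char}(F)\neq2$ is used.
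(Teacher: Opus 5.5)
Your proposal is correct and follows the paper's proof essentially step for step. It uses the same idempotent obstruction $e=e^2\in\mathcal N_n(P)P\subseteq\mathcal N_{n+1}(P)$, the same Lie obstruction via a subspace reproduced by $\operatorname{ad}_x$, and the same direct computations of the series for $P_{2,1}$, $P_{2,2}$, $P_{3,1}$, $P_{3,3}$, $P_{3,4}(M)$, $P_{3,5}$ and $P_{3,16}(M)$. The only cosmetic differences are that you handle $P_{3,19}$ and $P_{3,20}$ via the idempotent $z$ instead of via $[Fy,P]=Fy$, and that you mention $P_{3,22}(q)$ explicitly, which the paper omits because it does not occur when $\operatorname{char}(F)\neq2$.
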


\begin{proof}
For $P_{2,1}$ we have
\[
\mathcal N_1(P)=\langle0\rangle,
\]
whereas for $P_{2,2}$,
\[
\mathcal N_1(P)=Fb,
\qquad
\mathcal N_2(P)=\langle0\rangle.
\]
Thus both algebras are nilpotent.

Each of $P_{2,3},\ldots,P_{2,6}$ contains a non-zero idempotent.
If $e$ is such an idempotent and
$e\in\mathcal N_n(P)$, then
\[
e=e^2\in\mathcal N_n(P)P
\subseteq\mathcal N_{n+1}(P).
\]
Hence none of these algebras is nilpotent. Finally, for $P_{2,7}$,
\[
\mathcal N_1(P)=Fb,
\qquad
[Fb,P]=Fb,
\]
so the series does not terminate.

We turn to dimension three. For $P_{3,1}$,
\[
\mathcal N_1(P)=\langle0\rangle.
\]
For $P_{3,3}$ and $P_{3,4}(M)$,
\[
\mathcal N_1(P)=Fz,
\qquad
\mathcal N_2(P)=\langle0\rangle.
\]
For $P_{3,5}$,
\[
\mathcal N_1(P)=Fy\oplus Fz,\qquad
\mathcal N_2(P)=Fz,\qquad
\mathcal N_3(P)=\langle0\rangle.
\]
Finally, for every $P_{3,16}(M)$ we have
\[
P^2\leq Fz,\qquad
[P,P]=Fz,\qquad
zP=\langle0\rangle,\qquad
[z,P]=\langle0\rangle.
\]
Therefore
\[
\mathcal N_1(P)=Fz,
\qquad
\mathcal N_2(P)=\langle0\rangle.
\]

It remains to exclude the other structural types.
Each of
\[
P_{3,2},\quad P_{3,6},\ldots,P_{3,15}
\]
contains a non-zero idempotent, and hence cannot be nilpotent by the
argument above. For $P_{3,17},\ldots,P_{3,20}$ we have
\[
[P,P]=Fy,
\qquad
[Fy,P]=Fy,
\]
so their lower central series cannot terminate. For $P_{3,21}(A)$,
putting $V=[P,P]$ gives
\[
[V,P]=V,
\]
since the operator defining the Lie multiplication on $V$ is
invertible. Finally, for $P_{3,23}(\mathfrak{s})$,
\[
[P,P]=P.
\]
Thus none of these algebras is nilpotent, completing the proof.
\end{proof}

\subsection*{Specialization to the complex field}

We conclude by comparing our classification with the known
classification of three-dimensional complex Poisson algebras obtained
in~\cite{AbdelwahabFernandezMartin2025}. When $F=\mathbb C$, the
quadratic and cubic field-extension cases disappear, the family
$P_{3,22}(q)$ does not occur, and the field-dependent matrix and Lie
families reduce to their complex forms.

For convenience, we denote the entries of Theorem~2.3 in
\cite{AbdelwahabFernandezMartin2025} by $Q_i$ or $Q_i^\lambda$,
according as they are non-parametric or parametric. Put
\[
H=
\begin{pmatrix}
0&1\\
1&0
\end{pmatrix},
\qquad
J_2=
\begin{pmatrix}
1&1\\
0&1
\end{pmatrix}.
\]
After suitable changes of basis, the correspondence is as follows:
\[
\begin{array}{c|l}
\text{Type in \cite{AbdelwahabFernandezMartin2025}}
&
\text{Corresponding type in the present paper}
\\
\hline
Q_1
& P_{3,1}
\\
Q_2
& P_{3,16}(0)
\\
Q_3
& P_{3,21}(J_2)
\\
Q_4^{0}
& P_{3,17}
\\
Q_4^{\lambda},\ \lambda\neq0
& P_{3,21}(\operatorname{diag}(1,\lambda))
\\
Q_5
& P_{3,23}(\mathfrak{sl}_2(\mathbb C))
\\
Q_6
& P_{3,5}
\\
Q_7
& P_{3,10}
\\
Q_8
& P_{3,11}
\\
Q_9
& P_{3,15}
\\
Q_{10}
& P_{3,7}
\\
Q_{11}
& P_{3,8}
\\
Q_{12}
& P_{3,6}
\\
Q_{13}
& P_{3,3}
\\
Q_{14}
& P_{3,18}
\\
Q_{15}
& P_{3,16}(\operatorname{diag}(1,0))
\\
Q_{16}^{0}
& P_{3,4}(H)
\\
Q_{16}^{\lambda},\ \lambda\neq0
& P_{3,16}(\lambda^{-1}H)
\\
Q_{17}
& P_{3,14}
\\
Q_{18}
& P_{3,20}
\\
Q_{19}
& P_{3,2}
\\
Q_{20}
& P_{3,19}
\end{array}
\]

Let us briefly explain the two parametric correspondences. For the
family $Q_4^\lambda$ with $\lambda\neq0$, the projective-similarity
relation in $P_{3,21}(A)$ gives precisely the identification
\[
\lambda\sim\lambda^{-1}.
\]
The parameter value $\lambda=0$ corresponds to the separate type
$P_{3,17}$.

For $Q_{16}^{\lambda}$ with $\lambda\neq0$, the multiplication and
Lie bracket may be written in the form
\[
e_1e_2=e_3,
\qquad
[e_1,e_2]=\lambda e_3.
\]
Putting
\[
x=e_1,\qquad
y=e_2,\qquad
z=\lambda e_3,
\]
we obtain
\[
[x,y]=z,
\qquad
xy=\lambda^{-1}z.
\]
Thus this algebra corresponds to
\[
P_{3,16}(\lambda^{-1}H),
\]
and the determinant parameter of
Corollary~\ref{cor:3d-central-derived-char-not-two} is
\[
d=\det(\lambda^{-1}H)=-\lambda^{-2}.
\]
Accordingly, the identification $\lambda\sim-\lambda$ in the complex
classification corresponds precisely to equality of the determinant
parameter $d$.

Finally, since $\mathbb C$ has no non-trivial finite field extensions
and every non-degenerate symmetric bilinear form of dimension $2$ over
$\mathbb C$ is equivalent up to scalar congruence, no additional
complex isomorphism classes arise from
$P_{3,4}(M)$, $P_{3,9}(E)$, $P_{3,12}(E)$ or $P_{3,13}(K)$.
Consequently, the classification obtained in this paper specializes,
up to changes of basis and notation, exactly to the known
classification of three-dimensional complex Poisson algebras
in~\cite{AbdelwahabFernandezMartin2025}.

\end{document}